\documentclass[11pt]{article}
\usepackage{amsmath,amssymb,amsthm}
\usepackage[margin=1in]{geometry}
\usepackage{hyperref}
\usepackage{enumitem}
\usepackage{booktabs}

\newtheorem{theorem}{Theorem}[section]
\newtheorem{proposition}[theorem]{Proposition}
\newtheorem{lemma}[theorem]{Lemma}
\newtheorem{corollary}[theorem]{Corollary}
\theoremstyle{definition}
\newtheorem{definition}[theorem]{Definition}
\newtheorem{example}[theorem]{Example}
\theoremstyle{remark}
\newtheorem{remark}[theorem]{Remark}

\newcommand{\Dom}{\trianglerighteq}
\newcommand{\NDom}{\ntrianglelefteq}

\title{Specht ideals for the imprimitive complex reflection group\\
$G(r,1,n)$: geometric orbit types and a dominance theorem}

\author{Ibrahim Nonkan\'e$^1$ \and Joel Tossa$^2$}

\date{}

\begin{document}
\maketitle

\begin{center}
\footnotesize
$^1$D\'epart\'ement d'\'economie et de math\'ematiques appliqu\'ees, IUFIC,
Universit\'e Thomas Sankara, Burkina Faso\\
\texttt{nonkane\_ibrahim@yahoo.fr, ibrahim.nonkane@uts.bf}\\[2mm]
$^2$Institut de Math\'ematiques et de Sciences Physiques, IMSP,
Universit\'e d'Abomey-Calavi, B\'enin\\
\texttt{joel.tossa@imsp-uac.org}
\end{center}
\vspace{1em}

\begin{abstract}
Moustrou, Riener, and Verdure related the dominance order on
partitions to Specht ideal and variety inclusion for $S_n$; we
previously extended this to direct products of symmetric groups. Here
we take a first step toward the imprimitive complex reflection group
$G(r,1,n)=(\mathbb{Z}/r\mathbb{Z})\wr S_n$. The correct geometric
invariant of a point $x\in K^n$ is a pair $(\mu,z)$: a classical
partition $\mu$ on the $r$-th powers of the nonzero coordinates,
together with the number $z$ of zero coordinates. We construct a
$G(r,1,n)$-invariant ideal $I_{(\mu,z)}$ for each such pair, and show
that a dominance order on $(\mu,z)$, with $z$ taking strict priority
over $\mu$, exactly characterizes both ideal inclusion and reverse
variety inclusion. We settle a monomial criterion in three regimes,
recover the classical $B_n$-Specht ideal theory at $r=2$ when $z=0$,
and establish radicality of $I_{(\mu,z)}$ at the two extremes of the
$z=0$ level. Four computational examples, checked exhaustively for
group invariance, illustrate the construction throughout.
\end{abstract}

\noindent\textbf{Keywords:} imprimitive complex reflection groups, wreath
products, Specht ideals, dominance order, monomial criterion, invariant
theory.

\noindent\textbf{2020 Mathematics Subject Classification:} 13A50, 20F55
(primary); 05E10, 13P10 (secondary).

\section{Introduction}

Let $K[x_1,\dots,x_n]$ be a polynomial ring on which the symmetric group
$S_n$ acts by permuting variables. Moustrou, Riener, and Verdure~\cite{MRV}
studied the \emph{Specht ideal} $I_\lambda$ associated with a partition
$\lambda\vdash n$, the ideal generated by the classical Specht polynomials
of shape $\lambda$, and proved that the dominance order on partitions
exactly characterizes both the inclusion of Specht ideals and the reverse
inclusion of their varieties. They further related the monomials of an
$S_n$-invariant ideal's elements to the Specht ideals it contains, with
direct applications to locating the common zeros of a symmetric polynomial
system.

In a companion paper~\cite{ONT2}, we extended this theory to a direct
product of symmetric groups $S_{\{1,\dots,p\}}\times S_{\{p+1,\dots,n\}}$
(and more generally to $k$ blocks), a \emph{reducible} extension of the
underlying group. This note begins an investigation in an orthogonal
direction: the imprimitive complex reflection group
$$G(r,1,n) = (\mathbb{Z}/r\mathbb{Z})\wr S_n = \{(\eta_1,\dots,\eta_n;\sigma)
: \eta_i\in\mu_r,\ \sigma\in S_n\},$$
where $\mu_r\subset K^\times$ denotes the group of $r$-th roots of unity,
acting on $K^n$ by
$$(\eta;\sigma)\cdot f(x_1,\dots,x_n) = f(\eta_{\sigma(1)}x_{\sigma(1)},
\dots,\eta_{\sigma(n)}x_{\sigma(n)}).$$

This setting is structurally different from~\cite{ONT2} in two ways that
turn out to matter a great deal. First, $G(r,1,n)$ contains the full
symmetric group $S_n$, which mixes \emph{all} $n$ coordinate positions:
there is no fixed partition into blocks that the group respects, unlike the
direct-product case where $S_p\times S_q$ never mixes
$\{1,\dots,p\}$ with $\{p+1,\dots,n\}$. Second, $G(r,1,n)$ acts by scalar
multiplication (through $\mu_r$) in addition to permutation, a phenomenon
entirely absent from~\cite{MRV,ONT2}. Together, these two features mean
that the natural "which coordinates are equal" combinatorics of the
symmetric-group setting must be replaced by something that also tracks
\emph{which coordinates vanish}, and does so in a way that remains
sensible under a group that can send any coordinate to any other.

A natural resource for this direction is the theory of \emph{higher Specht
polynomials} of Ariki, Terasoma, and Yamada~\cite{ATY}, later extended to
the full family $G(r,p,n)$ by Morita and Yamada~\cite{MY}, together with
Can's construction~\cite{Can} of a dominance order on $m$-tuples of
partitions governing an analogous family of Specht \emph{modules}. Both of
these are representation-theoretic constructions: they describe how the
coinvariant algebra of $G(r,1,n)$ decomposes into irreducible
$G(r,1,n)$-modules, indexed by $r$-tuples of partitions. We found, on
investigation, that this indexing does \emph{not} coincide with the
geometric stratification of $K^n$ by orbit type under $G(r,1,n)$, and that
ideals generated directly from higher Specht polynomials of a fixed shape
are in general \emph{not} $G(r,1,n)$-invariant.

A recent and parallel development deserves mention. Debus and
Gottwald~\cite{DG25} complete the combinatorial study of Specht ideals
for essential real reflection groups by treating the two infinite
families left unexplored until now, the dihedral groups $I_2(n)$ and
the even signed symmetric group $D_n$. Their approach remains anchored
in the classical representation-theoretic construction: type-$D$
Specht polynomials are obtained via Clifford theory from type-$B$
ones, and an ad hoc combinatorial order on dipartitions is introduced
to restore the three-way equivalence between ideal inclusion,
combinatorial order, and reverse variety inclusion. Their most
striking result for our purposes is negative: unlike the $S_n$ and
$B_n$ cases, they show that \emph{no} partial order on orbit types can
describe type-$D$ Specht varieties as a disjoint union of orbit sets
(their Theorem~5.9). This structural breakdown, specific to an
index-two subgroup obtained by a sign-parity constraint, contrasts
with our situation: the construction of $I_{(\mu,z)}$ proposed here
starts directly from the target variety rather than from a
representation-theoretic indexing, and our Theorem~\ref{thm:main}
establishes the full equivalence with no analogous obstruction. We
also note that Debus and Gottwald anticipate, in their concluding
remarks, the general difficulty encountered by our own initial
attempt via higher Specht polynomials~\cite{ATY,MY}: for the family
$G(r,p,n)$ in general, distinct irreducible representations can have
coinciding Specht varieties (they give the example $r=3$, $n=2$ where
$V(X_1^2X_2^2)=V(X_1X_2)$ despite distinct ideals), which explains in
hindsight why a direct geometric construction, independent of this
indexing, proved necessary in our setting.

We initially attempted to construct $I_{(\mu,z)}$ (defined below in
Section~\ref{sec:construction}) directly from the higher Specht
polynomials $F_T^S$ of~\cite{ATY}, generated over all standard tableaux of
a fixed shape. This fails: such an ideal is in general only invariant
under the Young subgroup $S_{n_0}\times\cdots\times S_{n_{r-1}}$
of~\cite[Thm.~1(ii)]{ATY}, not under $G(r,1,n)$ itself. We verified this
concretely for $n=3$, $r=2$: taking the shape $(2)+(1)$ and its three
standard tableaux, the resulting ideal fails invariance under the swap
$x_1\leftrightarrow x_2$. Taking the \emph{full orbit} of a single such
polynomial under $G(r,1,n)$ does restore invariance, but the resulting
ideal was found (again by direct computation) to be incomplete: it does
not always contain every degeneration required by the topological closure
principle discussed in Section~\ref{sec:construction} below. The
construction of Section~\ref{sec:construction} was found only after
abandoning this approach in favour of building $I_{(\mu,z)}$ directly from
the target variety, and is accordingly independent of~\cite{ATY,MY,Can},
built instead directly from the classical single-group theory
of~\cite{MRV} via a transport map.

\paragraph{Contributions.} Our starting point is identifying the correct
orbit-type invariant of a point under $G(r,1,n)$
(Section~\ref{sec:orbit-types}); from there, we construct a Specht ideal
$I_{(\mu,z)}$ for every such type (Section~\ref{sec:construction}) and
show it is always $G(r,1,n)$-invariant (Theorem~\ref{thm:invariance}).
The heart of the paper is a complete three-way equivalence, dominance
order on $(\mu,z)$, ideal inclusion, and reverse variety inclusion,
exactly analogous to the main theorem of~\cite{MRV}
(Theorem~\ref{thm:main}), from which the exact vanishing locus of
$I_{(\mu,z)}$ follows (Corollary~\ref{cor:variety}). Toward a monomial
criterion analogous to~\cite[Prop.~2]{MRV}, we are then able to settle
three full regimes, including a sharp symbolic-power bound for
monomials with several simultaneous excess exponents
(Section~\ref{sec:monomial}). As a consistency check on the whole
construction, we show in Section~\ref{sec:r2-comparison} that the
specialization $r=2$ recovers the classical $B_n$-Specht ideal theory
of Debus, Moustrou, Riener, and Verdure~\cite{DMRV23} exactly whenever
$z=0$, though not universally once $z\ge1$, and in
Proposition~\ref{prop:radical-extremes} we establish radicality of
$I_{(\mu,z)}$ at the two extremes of the $z=0$ level via Steinberg's
factorization theorem. We illustrate the main construction throughout
on four independent cases, each checked exhaustively for group
invariance rather than by sampling (Section~\ref{sec:verification}).

\section{Geometric orbit types under $G(r,1,n)$}
\label{sec:orbit-types}

We begin by making precise the geometric invariant sketched above.

Throughout, $K$ is an infinite field containing all $r$-th roots of unity
(e.g.\ algebraically closed), $r\ge2$, and $G=G(r,1,n)$. Infiniteness is
needed only for the existence of points of every exact type $(\mu,z)$
used in Theorem~\ref{thm:main}'s proof: a partition $\mu$ with several
distinct parts requires that many distinct nonzero $r$-th power values,
which a small finite field need not have even if it contains all $r$-th
roots of unity (e.g.\ $\mathbb{F}_4$ contains all cube roots of unity but
has only $3$ nonzero elements). The hypothesis $r\ge2$ is discussed in
Remark~\ref{rem:r-geq-2} below.

\begin{definition}
For $x=(x_1,\dots,x_n)\in K^n$, let $Z(x)=\{i : x_i=0\}$ and
$z(x)=|Z(x)|$ record which coordinates vanish and how many. On the
nonzero coordinates, let $\mu(x)$ denote the partition of $n-z(x)$
recording the multiplicities of repeated values among
$\{x_i^r : x_i\neq0\}$, that is, the classical single-group orbit type
(in the sense of~\cite{MRV}) of the tuple
$(x_i^r)_{x_i\neq0}\in K^{n-z(x)}$.
\end{definition}

Working with $r$-th powers, rather than the coordinates themselves, is
what makes $\mu(x)$ well defined under the group's scalar action: two
nonzero coordinates $x_i,x_j$ can be sent to one another by some
$\eta\in\mu_r$ exactly when $x_i^r=x_j^r$, so $\mu(x)$ records precisely
the partition of the nonzero coordinates into blocks that the scalar
action can permute within, while $z(x)$ separately tracks the
coordinates that action cannot touch at all.

For example, with $n=4$ and $r=2$: the point $x=(1,-1,0,3)$ has
$Z(x)=\{3\}$, so $z(x)=1$; among the remaining three coordinates,
$(x_1^2,x_2^2,x_4^2)=(1,1,9)$ has two equal values and one distinct, so
$\mu(x)=(2,1)\vdash3$. At the two extremes, a point whose nonzero
coordinates all have distinct $r$-th powers has
$\mu(x)=(1,\dots,1)\vdash(n-z(x))$ (the fully generic type), while a
point whose nonzero coordinates all share the same $r$-th power has
$\mu(x)=(n-z(x))$ (the fully coincident type); Proposition~\ref{prop:stab}
below shows these extremes correspond to the smallest and largest
possible stabilizers on the nonzero block.

\begin{remark}
\label{rem:r-geq-2}
The hypothesis $r\ge2$ is essential, not a convenience: at $r=1$,
$\mu_1=\{1\}$ and $G(1,1,n)=S_n$ exactly, but the construction of this
paper does \emph{not} reduce to the classical theory of~\cite{MRV} for
$S_n$ acting directly on $K^n$. The reason is that $z(x)$ is only a
meaningful invariant, separate from $\mu(x)$, because $0$ is the unique
fixed point of the nontrivial scalar action by $\mu_r$ when $r\ge2$; at
$r=1$ no such action exists, and singling out zero coordinates is
unmotivated by the group itself. Concretely, for $n=3$ and target
$(\mu,z)=((2),1)$: since $(2)$ is the top classical partition of $2$,
$I_\mu=(1)$ collapses trivially, and $I_{((2),1)}$ at $r=1$ reduces to
$(x_1x_2,x_1x_3,x_2x_3)$, the ideal of points with at least two zero
coordinates, vanishing at $(0,0,5)$ but not at $(5,5,5)$; the classical
Specht ideal $I_{(2,1)}$ of~\cite{MRV}, treating all three coordinates
uniformly, vanishes at $(5,5,5)$ but not at $(0,0,5)$. The two
constructions genuinely disagree at $r=1$, and every result in this
paper is stated for $r\ge2$.
\end{remark}

\begin{proposition}
\label{prop:stab}
For $x\in K^n$ with $z(x)=z$ and $\mu(x)=\mu\vdash(n-z)$, the stabilizer
$\mathrm{Stab}_G(x)$ is conjugate in $G$ to
$$S_\mu \times G(r,1,z),$$
where $S_\mu\subset S_{n-z}$ is the classical Young subgroup of shape $\mu$
(acting on the nonzero coordinates) and $G(r,1,z)$ is the full wreath
product on the zero coordinates.
\end{proposition}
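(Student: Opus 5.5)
The plan is to reduce to a normalized representative of the $G$-orbit of $x$, compute the stabilizer of that representative directly, and then transport the answer back using $\mathrm{Stab}_G(g\cdot x)=g\,\mathrm{Stab}_G(x)\,g^{-1}$. This identity holds for any group action. It therefore suffices to exhibit some $g\in G$ for which $\mathrm{Stab}_G(g\cdot x)$ is literally $S_\mu\times G(r,1,z)$, embedded in the standard block-diagonal way. Throughout, I use the action on points induced by the action on functions given in the introduction. A group element $(\eta;\sigma)$ sends the coordinate in position $i$ to position $\sigma(i)$ (or $\sigma^{-1}(i)$, depending on convention) and rescales it by a root of unity. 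The only facts used are that each coordinate is moved to a single coordinate and multiplied by an element of $\mu_r$, and that both the diagonal subgroup $\mu_r^n$ and the permutations $S_n$ lie in $G$.

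\emph{Step 1 (normalization).} Partition the nonzero coordinates into blocks according to the value of $x_i^r$; by definition the block sizes form $\mu$. Within one block, all entries satisfy $x_i^r=c$ for a fixed $c\neq 0$. Fix one entry $a$ of the block. Then $(x_i/a)^r=1$, so $x_i=\zeta_i a$ with $\zeta_i\in\mu_r$. Applying the diagonal element with entries $\zeta_i^{-1}$ on that block, and $1$ elsewhere, makes every entry of the block equal to $a$. Doing this for all blocks, and then applying a permutation, I will bring $x$ to a point $y=g\cdot x$ of the following form: $y=(a_1,\dots,a_1,\ a_2,\dots,a_2,\ \dots,\ 0,\dots,0)$. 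Here $a_j$ is repeated $\mu_j$ times, the values $a_j^r$ are pairwise distinct and nonzero, and the last $z$ coordinates vanish.

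\emph{Step 2 (stabilizer of $y$).} Let $h=(\eta;\sigma)$ fix $y$. Since $h$ maps each coordinate to a single coordinate multiplied by a root of unity, and roots of unity preserve both vanishing and $r$-th powers, three constraints follow. First, $h$ maps zero coordinates to zero coordinates and nonzero coordinates to nonzero coordinates. Second, it maps the $j$-th block into the block having the same $r$-th power, which is the $j$-th block itself. Third, on a nonzero coordinate the equation $\eta\,a_j=a_j$ forces the multiplier to be $1$. Hence the nonzero part of $h$ is an arbitrary element of the Young subgroup $S_\mu$, permuting within blocks, with trivial scalars. The zero coordinates impose no condition, so the zero part of $h$ is an arbitrary element of $G(r,1,z)$ acting on the last $z$ positions. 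Conversely, every such element visibly fixes $y$. Therefore $\mathrm{Stab}_G(y)=S_\mu\times G(r,1,z)$, and $\mathrm{Stab}_G(x)=g^{-1}\,\mathrm{Stab}_G(y)\,g$ is conjugate to it.

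The computation is routine. The main point of care is Step 2's bookkeeping with the paper's action convention, where the indices are written $\eta_{\sigma(i)}x_{\sigma(i)}$. I will check that, whichever of $\sigma$ or $\sigma^{-1}$ governs positions on points, the constraints "blocks to blocks" and "multiplier $=1$ on nonzero entries" come out identical. I will also check that the conjugating element $g$ genuinely lies in $G$. This holds because $g$ is a product of a diagonal element of $\mu_r^n$ and a permutation.
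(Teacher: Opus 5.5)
Your proof is correct. Its core computation is the same as the paper's: a stabilizing element must preserve the zero set, must preserve each block of equal $r$-th powers, and is unconstrained on the zero coordinates. The difference is where the normalization happens.

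The paper conjugates only by a permutation, to move the zeros to the end, and then computes $\mathrm{Stab}_G(x)$ at the original point. There the scalars on the nonzero block are forced to be the ratios $x_i/x_{\sigma_0(i)}$, so the paper obtains a \emph{twisted} copy of $S_\mu$. It then asserts, with a pointer to the $S_n$ case, that this twisted copy is conjugate in $G$ to the untwisted one.

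You also conjugate first by a diagonal element of $\mu_r^n$, so that each block has literally equal entries. At that point the multiplier on every nonzero coordinate is forced to be $1$, and the stabilizer is exactly the standard block-diagonal $S_\mu\times G(r,1,z)$. Conjugacy then follows from $\mathrm{Stab}_G(g\cdot x)=g\,\mathrm{Stab}_G(x)\,g^{-1}$ with an explicit $g$ (diagonal times permutation).

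Your Step 1 is therefore exactly the justification the paper leaves implicit for the twisted-versus-untwisted conjugacy; the cited $S_n$ setting has no scalars to twist. Your Step 2 is also simpler. In exchange, the paper's version gives an explicit description of $\mathrm{Stab}_G(x)$ at the given point itself, not just up to conjugacy.
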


\begin{proof}
Up to conjugating by a permutation, we may assume $Z(x)=\{n-z+1,\dots,n\}$,
i.e.\ the last $z$ coordinates of $x$ vanish and the first $n-z$ are
nonzero with orbit type $\mu$ under $x_i\mapsto x_i^r$. Let
$g=(\eta;\sigma)\in\mathrm{Stab}_G(x)$, so $\eta_{\sigma(i)}x_{\sigma(i)}=x_i$
for every $i$.

For $i\le n-z$ (so $x_i\ne0$): if $\sigma(i)>n-z$ then $x_{\sigma(i)}=0$,
forcing $x_i=\eta_{\sigma(i)}\cdot0=0$, a contradiction; hence
$\sigma(i)\le n-z$. Thus $\sigma$ maps $\{1,\dots,n-z\}$ into itself
injectively, and since this set is finite, $\sigma$ restricts to a
bijection of $\{1,\dots,n-z\}$ (and consequently of its complement
$\{n-z+1,\dots,n\}$ as well).

On $\{1,\dots,n-z\}$: for $\sigma_0:=\sigma|_{\{1,\dots,n-z\}}$ and each
$i\le n-z$, we need $\eta_{\sigma_0(i)}=x_i/x_{\sigma_0(i)}$, which is
well-defined (as $x_{\sigma_0(i)}\ne0$) and automatically lies in $\mu_r$
exactly when $x_i^r=x_{\sigma_0(i)}^r$, that is, exactly when $\sigma_0$
preserves the blocks of equal $r$-th power, i.e.\ $\sigma_0\in S_\mu$, the
classical Young subgroup of shape $\mu$. Conversely, every $\sigma_0\in
S_\mu$ arises this way, with the corresponding $\eta$'s on
$\{1,\dots,n-z\}$ uniquely determined by these ratios (an embedding of
$S_\mu$ into $G$ twisted by this specific choice of scalars, conjugate in
$G$ to the untwisted copy $S_\mu\times\{1\}$, exactly as in the
single-group case underlying~\cite[Def.~5]{MRV}).

On $\{n-z+1,\dots,n\}$: since $x_i=0$ for every $i$ in this range, the
constraint $\eta_{\sigma(i)}x_{\sigma(i)}=0=x_i$ holds automatically for
\emph{every} choice of $\sigma|_{\{n-z+1,\dots,n\}}\in S_z$ and every
choice of the corresponding $\eta$'s in $\mu_r$, that is, this part of
$g$ ranges freely over the full wreath product $G(r,1,z)$.

Combining the two independent parts gives $\mathrm{Stab}_G(x)\cong
S_\mu\times G(r,1,z)$, conjugate in $G$ to the subgroup acting in the
obvious way on the two coordinate blocks.
\end{proof}

Proposition~\ref{prop:stab} shows that $(\mu,z)$ is exactly the right
invariant of the $G$-orbit of $x$: unlike the representation-theoretic
indexing of~\cite{ATY,MY,Can} by $r$-tuples of partitions, no further
"colour" data is needed, precisely because the scalar part of the
stabilizer on a zero coordinate is the \emph{full} group $\mu_r$ (not a
proper subgroup depending on some reference phase), and the scalar part on
a nonzero coordinate is entirely absorbed by the $r$-th power.

\section{Construction of $I_{(\mu,z)}$}
\label{sec:construction}

Having identified the right invariant, we now build an ideal indexed by
it.

\subsection{The construction}

The construction combines, for every possible nonzero support, a
transported copy of the classical Specht ideal with the requirement
that the remaining coordinates vanish.

Fix $\mu\vdash(n-z)$, $0\le z\le n$. For $S\subset\{1,\dots,n\}$ with
$|S|=n-z$, write $S=\{i_1<\cdots<i_{n-z}\}$ and let
$$\phi_S : K[y_1,\dots,y_{n-z}]\longrightarrow K[x_{i_1},\dots,x_{i_{n-z}}],
\qquad y_j\mapsto x_{i_j}^r$$
be the transport map, and let $I_\mu\subset K[y_1,\dots,y_{n-z}]$ denote
the classical Specht ideal of~\cite{MRV}. Let
$$X_S := \big(x_j : j\notin S\big)\subset K[x_1,\dots,x_n]$$
be the ideal forcing every coordinate outside $S$ to vanish, and let
$$I_{\ge z+1} := \big(\text{all squarefree monomials of degree } n-z\big)$$
be the ideal of points with at least $z+1$ zero coordinates (a point has at
most $n-z-1$ nonzero coordinates exactly when every product of $n-z$
distinct coordinates vanishes).

\begin{definition}
\label{def:main}
$$I_{(\mu,z)} := \Big[\bigcap_{\substack{S\subset\{1,\dots,n\}\\|S|=n-z}}
\big(X_S + \phi_S(I_\mu)\big)\Big] \;\cap\; I_{\ge z+1}.$$
\end{definition}

Each factor plays a distinct role, matching the two-part variety formula
of Corollary~\ref{cor:variety} below. The factor $I_{\ge z+1}$
contributes the "more degenerate than $z$" part of the variety directly,
since it already vanishes exactly on points with strictly more than $z$
zero coordinates. The intersection over $S$ handles the remaining,
"exactly $z$ zero coordinates" case: for such a point, with true nonzero
support $S_0$, the piece $X_S+\phi_S(I_\mu)$ for any $S\ne S_0$ does not
constrain it (its coordinates outside $S$ include some nonzero one,
already outside $V(X_S)$), so only the single piece for $S=S_0$ is ever
active, and that piece tests exactly whether the point's own orbit type
$\mu(x)$ is dominated by the target $\mu$.

For example, with $n=3$, $r=2$, and target $(\mu,z)=((1,1),1)$: the piece
for $S=\{1,2\}$ is $X_{\{1,2\}}+\phi_{\{1,2\}}(I_{(1,1)}) =
(x_3,\,x_1^2-x_2^2)$, using the classical Vandermonde
$I_{(1,1)}=(y_1-y_2)$ transported by $y_j\mapsto x_j^2$. This piece
vanishes at $(1,1,0)$ (where $x_1^2=x_2^2$, the coincident type
$\mu=(2)$) but not at $(1,2,0)$ (where $x_1^2\neq x_2^2$, the generic
type $\mu=(1,1)$ itself), matching the fact that $(1,1)$, being the
bottom of the dominance order on $2$-part partitions, has a variety
containing every \emph{more} degenerate type but not its own.

\subsection{Invariance}

With the construction in hand, our first task is to confirm it behaves
as intended: that $I_{(\mu,z)}$ is genuinely invariant under the full
group, not merely under the pieces used to build it.

\begin{theorem}
\label{thm:invariance}
$I_{(\mu,z)}$ is a $G(r,1,n)$-invariant ideal.
\end{theorem}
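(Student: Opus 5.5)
The plan is to show that each $g=(\eta;\sigma)\in G$ acts on $K[x_1,\dots,x_n]$ as a ring automorphism. Such an automorphism commutes with intersections of ideals, so it suffices to show that $g$ fixes $I_{\ge z+1}$ and permutes the family $\{X_S+\phi_S(I_\mu)\}_{|S|=n-z}$ among themselves. Since $G$ is generated by the diagonal scalings $(\eta;\mathrm{id})$ and the permutations $(1;\sigma)$, we treat these two types separately.

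\textbf{Scalings.} A scaling $x_i\mapsto\eta_i x_i$ sends each squarefree monomial of degree $n-z$ to a nonzero scalar multiple of itself, so it fixes $I_{\ge z+1}$. It fixes each $X_S$, because each generator $x_j$ goes to $\eta_j x_j$. It fixes $\phi_S(I_\mu)$ pointwise: this ideal is generated by polynomials in the $x_{i_j}^r$, and $(\eta_i x_i)^r=x_i^r$. Hence each piece $X_S+\phi_S(I_\mu)$ is fixed.

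\textbf{Permutations.} A permutation of variables maps squarefree monomials of degree $n-z$ bijectively to themselves, so it fixes $I_{\ge z+1}$. It also sends $X_S$ to $X_{\sigma'(S)}$ for the appropriate image set $\sigma'(S)$, which has the same size $n-z$. The main point is to check that
\[
\sigma\bigl(\phi_S(I_\mu)\bigr)=\phi_{\sigma'(S)}(I_\mu).
\]
Write $S=\{i_1<\cdots<i_{n-z}\}$ and $\sigma'(S)=\{k_1<\cdots<k_{n-z}\}$. The permutation sends $x_{i_j}$ to $x_{k_{\tau(j)}}$ for some $\tau\in S_{n-z}$, because the order need not be preserved. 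Therefore
\[
\sigma\circ\phi_S=\phi_{\sigma'(S)}\circ\tau,
\]
where $\tau$ acts on $K[y_1,\dots,y_{n-z}]$ by permuting variables. Now we invoke the classical fact from \cite{MRV} that the Specht ideal $I_\mu$ is $S_{n-z}$-invariant: each permutation of a Specht polynomial is, up to sign, a Specht polynomial of a permuted tableau. This gives $\tau(I_\mu)=I_\mu$. Since $\sigma$ is a bijection on the $(n-z)$-subsets, it permutes the pieces, and the intersection over $S$ is preserved.

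\textbf{Assembling.} Let $J=\bigcap_S\bigl(X_S+\phi_S(I_\mu)\bigr)$. Then $I_{(\mu,z)}=J\cap I_{\ge z+1}$, and for every generator $g$ we have
\[
g(I_{(\mu,z)})=g(J)\cap g(I_{\ge z+1})=J\cap I_{\ge z+1}=I_{(\mu,z)},
\]
which proves the theorem.

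\textbf{Expected obstacle.} The main care point is the reindexing $\tau$ in the permutation step. The transport maps are defined using the increasing order of $S$, so invariance genuinely depends on the $S_{n-z}$-invariance of $I_\mu$ and not merely on relabelling. A secondary point is confirming that the action convention for $(\eta;\sigma)$ in the paper factors into a scaling composed with a permutation. This holds because
\[
f(\eta_{\sigma(1)}x_{\sigma(1)},\dots)
\]
is the permutation applied after the substitution $x_i\mapsto\eta_i x_i$.
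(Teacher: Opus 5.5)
Your proof is correct and follows essentially the same route as the paper. You show that $I_{\ge z+1}$ is fixed and that $g$ permutes the pieces $X_S+\phi_S(I_\mu)$ via $S\mapsto\sigma(S)$, with the $r$-th powers absorbing the scalars and the reindexing permutation $\tau$ handled by the $S_{n-z}$-invariance of $I_\mu$. The only cosmetic difference is that you split $g$ into a scaling and a permutation and check each type separately, whereas the paper treats a general $(\eta;\sigma)$ in one computation.
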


\begin{proof}
Since a finite intersection of invariant ideals is invariant, it suffices
to show that $I_{\ge z+1}$ is invariant and that $G$ permutes the family
$\{X_S+\phi_S(I_\mu)\}_{|S|=n-z}$ among itself.

For $I_{\ge z+1}$: permuting variables permutes its squarefree-monomial
generators among themselves, and scaling a variable by a root of unity
multiplies a generator by a nonzero scalar, staying in the same principal
piece; hence $I_{\ge z+1}$ is $G$-invariant.

Fix $g=(\eta;\sigma)\in G$ and $S$ with $|S|=n-z$. Since
$g(x_j)=\eta_{\sigma(j)}x_{\sigma(j)}$ is a nonzero scalar multiple of
$x_{\sigma(j)}$ for every $j$, we get
$g(X_S) = (x_{\sigma(j)} : j\notin S) = X_{\sigma(S)}$.

For the second summand, write $S=\{i_1<\cdots<i_{n-z}\}$ and let $f$ be a
generator of $I_\mu$, so $\phi_S(f) = f(x_{i_1}^r,\dots,x_{i_{n-z}}^r)$.
Applying $g$,
$$g(x_{i_j}^r) = \big(\eta_{\sigma(i_j)}x_{\sigma(i_j)}\big)^r =
\eta_{\sigma(i_j)}^r\, x_{\sigma(i_j)}^r = x_{\sigma(i_j)}^r,$$
since $\eta_{\sigma(i_j)}\in\mu_r$ satisfies $\eta_{\sigma(i_j)}^r=1$: the
$r$-th power exactly annihilates the scalar part of the action. Hence
$$g(\phi_S(f)) = f\big(x_{\sigma(i_1)}^r,\dots,x_{\sigma(i_{n-z})}^r\big).$$
Let $\tau\in S_{n-z}$ be the permutation such that reordering
$\sigma(i_1),\dots,\sigma(i_{n-z})$ increasingly gives the sorted listing
of $\sigma(S)$; then the right-hand side equals $\phi_{\sigma(S)}(\tau
\cdot f)$, where $\tau\cdot f$ denotes $f$ with its own $n-z$ variables
permuted by $\tau$. Since $I_\mu$ is itself $S_{n-z}$-invariant (this is
exactly the classical fact underlying~\cite{MRV}), $\tau\cdot f\in I_\mu$,
so $g(\phi_S(I_\mu))=\phi_{\sigma(S)}(I_\mu)$. Combining the two parts,
$$g\big(X_S+\phi_S(I_\mu)\big) = X_{\sigma(S)}+\phi_{\sigma(S)}(I_\mu),$$
so $g$ permutes the family of pieces via $S\mapsto\sigma(S)$, and the
intersection over all $S$ is therefore $G$-invariant. Intersecting with the
already-invariant $I_{\ge z+1}$ preserves invariance.
\end{proof}

\subsection{The main dominance theorem}

We now build toward the paper's central result: a complete dictionary
between the dominance order on pairs $(\mu,z)$, ideal inclusion, and
reverse variety inclusion. The first step is an explicit description of
$I_{(\mu,z)}$'s own vanishing locus.

\begin{lemma}
\label{lem:variety-union}
$$V\big(I_{(\mu,z)}\big) = \Big[\bigcup_{|S|=n-z} V\big(X_S+\phi_S(I_\mu)\big)\Big]
\;\cup\; V\big(I_{\ge z+1}\big).$$
\end{lemma}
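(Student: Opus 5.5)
This is the standard fact that the vanishing locus of a finite intersection of ideals is the union of the individual vanishing loci. No radical or Nullstellensatz input is needed, because the statement concerns $V(\cdot)$ as a set of points of $K^n$, not ideals of vanishing functions.

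\textbf{Reduction to two ideals.} It suffices to prove $V(I\cap J)=V(I)\cup V(J)$ for arbitrary ideals $I,J\subset K[x_1,\dots,x_n]$. Induction on the number of factors then handles the finitely many pieces $X_S+\phi_S(I_\mu)$ with $|S|=n-z$, together with the factor $I_{\ge z+1}$. The union is finite, since there are $\binom{n}{z}$ choices of $S$.

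\textbf{Inclusion $\supseteq$.} This is immediate: $I\cap J\subset I$ and $I\cap J\subset J$, and $V$ reverses inclusions.

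\textbf{Inclusion $\subseteq$.} This is the one step with content. Suppose $x\notin V(I)\cup V(J)$. Then there exist $f\in I$ with $f(x)\ne0$ and $h\in J$ with $h(x)\ne0$. The product $fh$ lies in $IJ\subset I\cap J$ and satisfies $(fh)(x)=f(x)h(x)\ne0$, since $K$ is a field. Hence $x\notin V(I\cap J)$.

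\textbf{Assembling.} Applying the two-ideal statement repeatedly to Definition~\ref{def:main} gives
$$V\big(I_{(\mu,z)}\big)=V\Big(\bigcap_{|S|=n-z}\big(X_S+\phi_S(I_\mu)\big)\Big)\cup V\big(I_{\ge z+1}\big)=\Big[\bigcup_{|S|=n-z}V\big(X_S+\phi_S(I_\mu)\big)\Big]\cup V\big(I_{\ge z+1}\big),$$
which is the claimed formula.

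I expect no real obstacle; the only point to check is the product trick $IJ\subset I\cap J$ in the $\subseteq$ direction. One could add that $V(X_S+\phi_S(I_\mu))=V(X_S)\cap V(\phi_S(I_\mu))$, but that refinement belongs to the later corollary, not to this lemma.
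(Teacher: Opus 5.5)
Your proof is correct and follows the paper's argument: both establish $V(A\cap B)=V(A)\cup V(B)$ using the product $fh\in A\cap B$, then extend by induction to the $\binom{n}{n-z}+1$ pieces of Definition~\ref{def:main}. The only cosmetic difference is that you prove the nontrivial inclusion by contrapositive, picking nonvanishing elements of both ideals, whereas the paper fixes $f\in A$ with $f(x)\neq 0$ and concludes that every $g\in B$ vanishes at $x$.
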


\begin{proof}
We first verify the underlying algebraic fact for two ideals
$A,B\subset K[x_1,\dots,x_n]$: $V(A\cap B)=V(A)\cup V(B)$. Since
$A\cap B\subset A$ and $A\cap B\subset B$, every point of $V(A)$ or
$V(B)$ already vanishes on the smaller ideal $A\cap B$, giving
$V(A)\cup V(B)\subset V(A\cap B)$. Conversely, suppose $x\in V(A\cap B)$;
if $x\in V(A)$ we are done, so suppose $x\notin V(A)$, meaning some
$f\in A$ has $f(x)\ne0$. For any $g\in B$, $fg\in A$ (as $A$ is an ideal)
and $fg\in B$ (as $B$ is an ideal), so $fg\in A\cap B$, giving
$f(x)g(x)=0$; since $K$ is a field and $f(x)\ne0$, this forces $g(x)=0$.
As $g\in B$ was arbitrary, $x\in V(B)$. Hence $V(A\cap B)\subset
V(A)\cup V(B)$, and the two inclusions give equality.

The same argument extends to any finite intersection by induction:
applying the two-ideal case repeatedly, with a different association at
each step, gives $V(A_1\cap\cdots\cap A_m)=V(A_1)\cup\cdots\cup V(A_m)$
for any ideals $A_1,\dots,A_m$. Definition~\ref{def:main} presents
$I_{(\mu,z)}$ as exactly such a finite intersection: one piece
$X_S+\phi_S(I_\mu)$ for each of the $\binom{n}{n-z}$ subsets $S$ of size
$n-z$, together with the single further piece $I_{\ge z+1}$. Applying
the fact above to this list of $\binom{n}{n-z}+1$ pieces gives precisely
the stated union.
\end{proof}

\begin{corollary}[Exact variety]
\label{cor:variety}
$$V\big(I_{(\mu,z)}\big) = \{x : z(x)>z\} \;\cup\;
\{x : z(x)=z,\ \mu(x)\NDom\mu\}.$$
\end{corollary}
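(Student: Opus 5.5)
The plan is to start from Lemma~\ref{lem:variety-union} and compute each piece of the union separately. We then sort points by their number of zero coordinates $z(x)$, into the cases $z(x)>z$, $z(x)=z$ and $z(x)<z$.

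\emph{The factor $I_{\ge z+1}$.} A point $x$ lies in $V(I_{\ge z+1})$ iff every product of $n-z$ distinct coordinates vanishes. Equivalently, $x$ has at most $n-z-1$ nonzero coordinates, i.e.\ $z(x)>z$. This already gives the first set in the statement. It also shows that every point with $z(x)>z$ lies in the variety, whatever the pieces indexed by $S$ do.

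\emph{The pieces $X_S+\phi_S(I_\mu)$.} We have $V(X_S+\phi_S(I_\mu))=V(X_S)\cap V(\phi_S(I_\mu))$. Here $V(X_S)$ consists of the points vanishing outside $S$, so any such point has $z(x)\ge z$. Hence no point with $z(x)<z$ lies in the union, which disposes of that case.

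Now let $z(x)=z$ with nonzero support $S_0$, $|S_0|=n-z$. For $S\neq S_0$ some coordinate outside $S$ is nonzero, so $x\notin V(X_S)$. Thus $x$ lies in the variety iff $x\in V(\phi_{S_0}(I_\mu))$. Evaluating a transported generator gives $\phi_{S_0}(f)(x)=f(x_{i_1}^r,\dots,x_{i_{n-z}}^r)$. So this condition says exactly that the point $y=(x_i^r)_{i\in S_0}\in K^{n-z}$ lies in $V(I_\mu)$. By definition the classical orbit type of $y$ is $\mu(x)$.

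\emph{The main step.} It remains to invoke the classical variety description of Moustrou--Riener--Verdure~\cite{MRV}: $y\in V(I_\mu)$ iff the orbit type of $y$ satisfies $\mu(y)\NDom\mu$, i.e.\ $\mu(y)$ is not dominated by $\mu$. This is the only non-formal input, and the care needed is in matching conventions. The paper's own example fixes the convention: $I_{(1,1)}=(y_1-y_2)$ vanishes on type $(2)$ but not on $(1,1)$, and $I_{(n)}=(1)$ vanishes nowhere. The statement's $\NDom$ (``not $\trianglelefteq$'') is consistent with this, and I would cite the MRV theorem in exactly that form. Since $y$ has nonzero entries, there is no subtlety about zeros at this stage: the classical result is applied to an arbitrary point of $K^{n-z}$.

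Combining the three cases $z(x)>z$, $z(x)=z$ and $z(x)<z$ gives the stated equality.
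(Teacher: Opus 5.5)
Your proposal is correct and follows the paper's proof essentially step for step. You reduce via Lemma~\ref{lem:variety-union}, read off $\{z(x)>z\}$ from $I_{\ge z+1}$, isolate the single active piece $S=S_0$ when $z(x)=z$, and pull back to the classical exact-variety formula at $y=(x_i^r)_{i\in S_0}$. Your explicit treatment of the case $z(x)<z$, using the fact that no such point lies in any $V(X_S)$, fills in a step the paper's proof leaves tacit.
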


\begin{proof}
By Lemma~\ref{lem:variety-union}, $V(I_{\ge z+1})=\{z(x)>z\}$ directly (a
point has fewer than $n-z$ nonzero coordinates exactly when every product
of $n-z$ distinct coordinates vanishes), accounting for the first piece.
For the second piece, fix $x$ with $z(x)=z$ exactly and write
$S_0:=\{1,\dots,n\}\setminus Z(x)$, the unique subset of size $n-z$
containing exactly the nonzero coordinates of $x$. For any \emph{other}
$S\ne S_0$ with $|S|=n-z$: since $S$ and $S_0$ have the same size but are
different sets, $S$ cannot contain all of $S_0$ (else $S_0\subset S$ with
equal cardinality would force $S=S_0$), so $S$ omits some $i\in S_0$.
Since $i\in S_0$, $x_i\ne0$; but $i\notin S$ means $x_i$ is one of the
generators of $X_S$, so $x\notin V(X_S)$, hence $x\notin
X_S+\phi_S(I_\mu)$ for every $S\ne S_0$, and the only piece of the union
in Lemma~\ref{lem:variety-union} that can contain $x$ is the one for
$S=S_0$. On that piece, $x\in V(X_{S_0})$ trivially (its coordinates
outside $S_0$ are the zero ones), so $x\in V(X_{S_0}+\phi_{S_0}(I_\mu))$
exactly when
every $\phi_{S_0}(f)$, $f\in I_\mu$, vanishes at $x$, that is, exactly
when the point $y:=(x_i^r)_{i\in S_0}\in K^{n-z}$ satisfies $f(y)=0$ for
every $f\in I_\mu$, i.e.\ $y\in V(I_\mu)$. By the single-group case of the exact variety
formula (Corollary~2.15 of~\cite{ONT2}, itself a direct consequence of
the main theorem of~\cite{MRV}), this holds exactly when the orbit type
of $y=(x_i^r)_{i\in S_0}$, which is precisely $\mu(x)$ by definition,
satisfies $\mu(x)\NDom\mu$.
\end{proof}

\begin{proposition}
\label{prop:own-type}
Let $x\in K^n$ with $z(x)=z$, $\mu(x)=\mu$.
\begin{enumerate}[label=(\roman*)]
\item $x\notin V\big(I_{(\mu,z)}\big)$.
\item If $x\notin V\big(I_{(\mu',z')}\big)$ for some $\mu'\vdash(n-z')$, then
$z<z'$, or $z=z'$ and $\mu\Dom\mu'$.
\end{enumerate}
\end{proposition}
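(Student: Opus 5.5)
Both parts follow directly from the exact variety formula, Corollary~\ref{cor:variety}, applied to the appropriate target pair. No new algebra is needed; the work is in reading the set description correctly and using two basic facts about dominance: it is reflexive, and it is defined only between partitions of the same integer.

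For (i), we apply Corollary~\ref{cor:variety} with target $(\mu,z)$, which writes $V(I_{(\mu,z)})$ as $\{z(x)>z\}\cup\{z(x)=z,\ \mu(x)\NDom\mu\}$. Since $z(x)=z$, the point is not in the first piece. Since $\mu(x)=\mu$ and dominance is reflexive, $\mu(x)\Dom\mu$ holds, so $\mu(x)\NDom\mu$ fails. Hence $x$ is not in the second piece either, and $x\notin V(I_{(\mu,z)})$.

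For (ii), we apply Corollary~\ref{cor:variety} with target $(\mu',z')$. The hypothesis $x\notin V(I_{(\mu',z')})$ means $x$ lies in neither piece.
\begin{itemize}
\item Missing the first piece gives $z(x)\le z'$, i.e.\ $z\le z'$.
\item If $z<z'$, we are done.
\item Otherwise $z=z'$. Then $x$ satisfies the condition $z(x)=z'$ of the second piece, so missing that piece forces $\mu(x)\NDom\mu'$ to fail, i.e.\ $\mu=\mu(x)\Dom\mu'$.
\end{itemize}
In the case $z=z'$, both $\mu$ and $\mu'$ are partitions of $n-z$, so the dominance comparison is meaningful. Since $\NDom$ is the paper's macro for $\ntrianglelefteq$, the sentence ``$\mu(x)\NDom\mu'$ fails'' must be read as ``$\mu(x)\Dom\mu'$ holds''. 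This is the same convention used in Corollary~\ref{cor:variety}.

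The only step requiring care is the interpretation of the negated relation in the corollary. I would pin it down by checking against the worked example in Section~\ref{sec:construction}. For the target $((1,1),1)$, the coincident type $(2)$ lies in the variety and the generic type $(1,1)$ does not. This fits the reading that $V$ contains exactly the points with $z(x)=z$ whose type does not dominate the target in the sense of $\mu(x)\Dom\mu$ failing, where strictly more degenerate types count as ``larger'' in the paper's orientation.

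Once the orientation of $\NDom$ is fixed consistently with how Corollary~\ref{cor:variety} was derived from~\cite{MRV}, both parts are one-line set-membership arguments. Part (i) only uses reflexivity. Part (ii) is the contrapositive of the second piece's membership condition, after the first piece has settled the comparison of $z$ with $z'$.
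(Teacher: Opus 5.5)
Your route is exactly the paper's: both parts are read off from Corollary~\ref{cor:variety}, and part (i), which uses reflexivity, is fine. The problem is the one step you yourself flagged as delicate.

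With the paper's macros, $\NDom$ is $\ntrianglelefteq$ and $\Dom$ is $\trianglerighteq$. So the failure of $\mu(x)\NDom\mu'$ means $\mu(x)\trianglelefteq\mu'$, not $\mu(x)\trianglerighteq\mu'$, and for distinct comparable partitions these say opposite things.

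Your own sanity check exposes this. In the usual order $(2)\trianglerighteq(1,1)$, since more coincident types are larger. Yet the point $(1,1,0)$, of type $((2),1)$, lies in $V(I_{((1,1),1)})$. So at level $z(x)=z$, the points \emph{outside} the variety are those whose type is dominated by the target. That is the opposite of the reading you say the example ``fits''.

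Taken literally, (ii) is in fact false. Take $n=2$ and $x=(a,b)$ with $ab\neq0$ and $a^r\neq b^r$ (e.g.\ $(1,2)$ over $\mathbb{C}$). Then $I_{((2),0)}=(x_1x_2)$, so $x\notin V(I_{((2),0)})$. But $z=z'=0$ and $\mu(x)=(1,1)\ntrianglerighteq(2)$, so the conclusion of (ii) fails.

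What your argument correctly proves is ``$z<z'$, or $z=z'$ and $\mu\trianglelefteq\mu'$''. The paper's proof makes the identical substitution (``if $z(x)=z'$, $\mu(x)\Dom\mu'$''). Its other uses of $\Dom$ are consistent only if $\Dom$ is read as ``is dominated by'', i.e.\ as the complement of $\NDom$. For instance, in Theorem~\ref{thm:main} it deduces $I_\mu\subset I_{\mu'}$ from $\mu\Dom\mu'$ via~\cite{MRV}.

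So your proof agrees with the paper's and is valid under that reading. You should fix the orientation explicitly by declaring that $\Dom$ here means $\trianglelefteq$. Do not assert that the negation of $\ntrianglelefteq$ is $\trianglerighteq$, or cite an example that actually contradicts it.
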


\begin{proof}
Both are immediate from Corollary~\ref{cor:variety}: (i) since $z(x)=z$
(not $>z$) and $\mu(x)=\mu$ (not $\NDom\mu$, as no partition is
$\NDom$ itself), $x$ satisfies neither condition defining $V(I_{(\mu,z)})$.
(ii) is the contrapositive of Corollary~\ref{cor:variety} applied to
$I_{(\mu',z')}$: if $x\notin V(I_{(\mu',z')})$ then $z(x)\le z'$ and, if
$z(x)=z'$, $\mu(x)\Dom\mu'$; substituting $z(x)=z$, $\mu(x)=\mu$ gives
$z\le z'$ with equality forcing $\mu\Dom\mu'$, i.e.\ $z<z'$, or $z=z'$
and $\mu\Dom\mu'$.
\end{proof}

\begin{theorem}
\label{thm:main}
For $\mu\vdash(n-z)$, $\mu'\vdash(n-z')$, define $(\mu,z)\Dom(\mu',z')$ to
mean "$z<z'$, or $z=z'$ and $\mu\Dom\mu'$" (classical dominance on the
$\mu$-part). The following are equivalent:
\begin{enumerate}[label=(\roman*)]
\item $(\mu,z)\Dom(\mu',z')$;
\item $I_{(\mu,z)}\subset I_{(\mu',z')}$;
\item $V\big(I_{(\mu',z')}\big)\subset V\big(I_{(\mu,z)}\big)$.
\end{enumerate}
\end{theorem}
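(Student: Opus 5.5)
The plan is to prove the cycle (i)$\Rightarrow$(ii)$\Rightarrow$(iii)$\Rightarrow$(i). The middle implication is formal: if $I_{(\mu,z)}\subset I_{(\mu',z')}$, then any point on which every element of the larger ideal vanishes also kills the smaller one, so $V(I_{(\mu',z')})\subset V(I_{(\mu,z)})$.

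For (iii)$\Rightarrow$(i), first choose a point $x\in K^n$ of exact type $(\mu,z)$. Put zeros in the last $z$ coordinates. On the first $n-z$ coordinates, place one block of equal nonzero values for each part of $\mu$, with the blocks having pairwise distinct $r$-th powers. This is possible because $K$ is infinite, so the map $t\mapsto t^r$ on $K^\times$ has finite fibres and hence infinite image. Proposition~\ref{prop:own-type}(i) gives $x\notin V(I_{(\mu,z)})$. By (iii), $x\notin V(I_{(\mu',z')})$. Proposition~\ref{prop:own-type}(ii) then yields $z<z'$, or $z=z'$ and $\mu\Dom\mu'$, which is exactly (i).

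The substantive step is (i)$\Rightarrow$(ii), which must be an honest ideal inclusion rather than a statement about zero sets. I split it into two cases.

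\emph{Case $z=z'$ and $\mu\Dom\mu'$.} The main theorem of~\cite{MRV} gives $I_\mu\subset I_{\mu'}$ in $K[y_1,\dots,y_{n-z}]$. Since each $\phi_S$ is a ring homomorphism, $\phi_S(I_\mu)\subset\phi_S(I_{\mu'})$, and adding $X_S$ gives $X_S+\phi_S(I_\mu)\subset X_S+\phi_S(I_{\mu'})$ for every $S$ with $|S|=n-z$. The factor $I_{\ge z+1}$ is common to both ideals. Intersecting all pieces preserves the inclusion, so $I_{(\mu,z)}\subset I_{(\mu',z)}$.

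\emph{Case $z<z'$.} Here I use only $I_{(\mu,z)}\subset I_{\ge z+1}$ and show that $I_{\ge z+1}$ lies in every piece defining $I_{(\mu',z')}$. There are two kinds of piece.
\begin{itemize}
\item Each generator of $I_{\ge z+1}$ is a squarefree monomial of degree $n-z$. It is divisible by a squarefree monomial of degree $n-z'<n-z$, so $I_{\ge z+1}\subset I_{\ge z'+1}$.
\item Fix $S'$ with $|S'|=n-z'$. A squarefree monomial in $n-z>|S'|$ distinct variables must involve some $x_j$ with $j\notin S'$. Hence it lies in $X_{S'}\subset X_{S'}+\phi_{S'}(I_{\mu'})$.
\end{itemize}
Therefore $I_{(\mu,z)}\subset I_{\ge z+1}\subset I_{(\mu',z')}$.

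I expect this second case to be the main obstacle. Reverse variety inclusion is immediate from Corollary~\ref{cor:variety}, but the ideal inclusion is not: $I_{(\mu',z')}$ is not known to be radical, so one cannot pass from zero sets back to ideals. The pigeonhole argument on squarefree monomials is what makes the algebraic inclusion work, and I would check it carefully, including the edge case $z'=n$. There $S'=\varnothing$, $X_{S'}$ is the maximal homogeneous ideal, and $I_{\ge n+1}=(1)$ (the degree-$0$ monomial), so the inclusion holds trivially.
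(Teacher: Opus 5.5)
Your plan is the paper's own proof step for step. For $z=z'$ you transport \cite{MRV} through the ring homomorphisms $\phi_S$. For $z<z'$ you use pigeonhole on squarefree monomials of degree $n-z$, and your check of the edge case $z'=n$ and your explicit construction of a point of exact type are both fine. For (iii)$\Rightarrow$(i) you use a test point of exact type $(\mu,z)$.

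\textbf{The gap is in the case $z=z'$, and the paper's proof makes the same error.} The paper uses the column-Vandermonde convention, with $I_{(n)}=(1)$ and $I_{(1,\dots,1)}=\bigl(\prod_{i<j}(y_i-y_j)\bigr)$; see the proofs of Propositions~\ref{prop:squarefree} and~\ref{prop:radical-extremes}. With that convention, classical Specht ideals grow \emph{upward} in dominance: $I_\lambda\subset I_{\lambda'}$ exactly when $\lambda\trianglelefteq\lambda'$. So \cite{MRV} does not give $I_\mu\subset I_{\mu'}$ from $\mu\Dom\mu'$.

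Corollary~\ref{cor:variety} shows the statement as written is false. Suppose $\mu\Dom\mu'$ with $\mu\ne\mu'$. Then $\mu\NDom\mu'$ by antisymmetry. A point $x$ of exact type $(\mu,z)$ therefore lies in $V(I_{(\mu',z)})$ but not in $V(I_{(\mu,z)})$. So (iii) fails, hence (ii) fails, while (i) holds. The smallest instance is $n=2$, $z=z'=0$, $\mu=(2)$, $\mu'=(1,1)$. Here $I_{((2),0)}=(x_1x_2)$ is not contained in $I_{((1,1),0)}=\bigl(x_1x_2(x_1^r-x_2^r)\bigr)$, since every nonzero element of the latter has degree at least $r+2$.

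The same reversal enters your (iii)$\Rightarrow$(i) through Proposition~\ref{prop:own-type}(ii). The negation of ``$\mu(x)\NDom\mu'$'' in Corollary~\ref{cor:variety} is $\mu(x)\trianglelefteq\mu'$, not $\mu(x)\Dom\mu'$.

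So the fix belongs to the statement, not to your strategy. Replace (i) by ``$z<z'$, or $z=z'$ and $\mu\trianglelefteq\mu'$''. With that correction, every step you wrote goes through verbatim and \cite{MRV} is cited in its correct direction. The $z<z'$ case was already oriented correctly and is unchanged. You noted that reverse variety inclusion can be read off Corollary~\ref{cor:variety}; running that same check on the case $z=z'$ would have exposed the problem.
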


\begin{proof}
(i)$\Rightarrow$(ii): Suppose first $z=z'$ and $\mu\Dom\mu'$. By the main
theorem of~\cite{MRV}, $I_\mu\subset I_{\mu'}$ in $K[y_1,\dots,y_{n-z}]$;
since $\phi_S$ is a ring homomorphism, $\phi_S(I_\mu)\subset\phi_S(I_{\mu'})$
for every $S$, so $X_S+\phi_S(I_\mu)\subset X_S+\phi_S(I_{\mu'})$ for every
$S$ of size $n-z$, and intersecting over $S$ (and with the common factor
$I_{\ge z+1}$) gives $I_{(\mu,z)}\subset I_{(\mu',z)}$.

Now suppose $z<z'$. For any $S'$ with $|S'|=n-z'<n-z$, every generator of
$I_{\ge z+1}$ (a squarefree monomial in $n-z$ distinct variables) has,
by pigeonhole, some variable $x_j$ with $j\notin S'$, so lies in $X_{S'}$;
hence $I_{\ge z+1}\subset X_{S'}$ for every such $S'$, giving
$I_{\ge z+1}\subset\bigcap_{|S'|=n-z'}\big(X_{S'}+\phi_{S'}(I_{\mu'})\big)$.
Likewise, every degree-$(n-z)$ squarefree monomial is a multiple of some
degree-$(n-z')$ squarefree monomial (drop $z'-z$ of its variables), so
$I_{\ge z+1}\subset I_{\ge z'+1}$. Combining, $I_{\ge z+1}\subset
I_{(\mu',z')}$, and since $I_{(\mu,z)}\subset I_{\ge z+1}$ by
Definition~\ref{def:main} itself, $I_{(\mu,z)}\subset I_{(\mu',z')}$.

(ii)$\Rightarrow$(iii): immediate, since $I\subset J\Rightarrow
V(J)\subset V(I)$.

(iii)$\Rightarrow$(i): Let $x$ have exact type $(\mu,z)$ (such $x$ exists:
pick any point with $z$ zero coordinates and orbit type $\mu$ on the rest,
possible since $K$ is infinite and contains all $r$-th roots of unity,
as infiniteness supplies enough distinct nonzero $r$-th power values for a
$\mu$ with several distinct parts). By
Proposition~\ref{prop:own-type}(i), $x\notin V(I_{(\mu,z)})$. Since
$V(I_{(\mu',z')})\subset V(I_{(\mu,z)})$, also $x\notin V(I_{(\mu',z')})$:
otherwise $x$ would lie in the subset $V(I_{(\mu',z')})\subset
V(I_{(\mu,z)})$, contradicting $x\notin V(I_{(\mu,z)})$. By
Proposition~\ref{prop:own-type}(ii), $(\mu,z)\Dom(\mu',z')$.
\end{proof}

\begin{remark}
\label{rem:stabilizer-mistake}
The order in Theorem~\ref{thm:main} gives $z$ strict priority over $\mu$:
no stratum with $z(x)=0$ can ever lie in $V(I_{(\mu,z)})$ for $z\ge1$,
since a point can never gain a zero coordinate as a limit of nearby points
with fewer zero coordinates disappearing: $z$ can only increase, never
decrease, under continuous degeneration. This topological observation is
what led us to the correct order after an earlier, mistaken attempt to
rank strata by stabilizer size (see Section~\ref{sec:verification}).
\end{remark}

\subsection{Specialization to $r=2$: comparison with the hyperoctahedral
group}
\label{sec:r2-comparison}

At $r=2$, $G(2,1,n)=(\mathbb{Z}/2\mathbb{Z})\wr S_n=B_n$, the
hyperoctahedral group, whose own Specht ideal theory was developed by
Debus, Moustrou, Riener, and Verdure~\cite{DMRV23} (see
also~\cite{DG25} for the poset of the remaining essential real
reflection group families). It is natural to ask whether $I_{(\mu,z)}$
recovers, at $r=2$, the classical $B_n$-Specht ideal $I^B_{(\lambda,\theta)}$
of the corresponding bipartition. Proposition~\ref{prop:stab} already
shows the two orbit-type dictionaries agree at the level of
stabilizers: $\mathrm{Stab}_{B_n}(x)\cong S_\mu\times B_z$ is exactly
the classical stabilizer form underlying~\cite{DMRV23}.

At the level of ideals, the two constructions agree \emph{exactly} on
every stratum with $z=0$, and on several (but not all) strata with
$z\ge1$. For $n=2$, every nontrivial target coincides verbatim:
$I_{((1,1),0)}=I^B_{(\emptyset,(1,1))}=(x_1x_2(x_1-x_2)(x_1+x_2))$,
$I_{((2),0)}=I^B_{(\emptyset,(2))}=(x_1x_2)$, and
$I_{((1),1)}=I^B_{((1),(1))}=(x_1,x_2)$; the same holds for the three
$z=0$ strata and the coincide-$z{=}1$ stratum of Example~\ref{ex:n3} at
$n=3$.

The agreement fails, however, at the generic-$z{=}1$ stratum of
Example~\ref{ex:n3}. Writing $I^B$ for the classical $B_3$-Specht ideal
of shape $((1),(1,1))$,
$$I^B=\big(x_1^3x_2-x_1x_2^3,\ x_1^3x_3-x_1x_3^3,\ x_2^3x_3-x_2x_3^3\big)
\ \subsetneq\ I_{((1,1),1)}=I^B+(x_1x_2x_3).$$
The point $(1,1,1)$ witnesses a strict inclusion of \emph{varieties},
not merely of ideals: it satisfies all three generators of $I^B$ (each
vanishes identically whenever two coordinates agree) but not
$x_1x_2x_3=1$, and a direct radical-membership computation (the
Rabinowitsch trick, adjoining $t$ and $1-t\cdot x_1x_2x_3$ to $I^B$ and
checking that $1$ is not produced) confirms that $x_1x_2x_3$ lies in no
power of $I^B$, so $V(I^B)\supsetneq V(I_{((1,1),1)})$ genuinely. This
is consistent with the witness point's own orbit type: $(1,1,1)$ has
$z=0$, $\mu=(3)$, entirely unrelated to the target stratum $z=1$; our
construction, built directly from the geometric stratification,
correctly excludes it, while the representation-theoretic
$B_3$-Specht ideal does not.

This shows that $I_{(\mu,z)}$ specializes \emph{correctly but not
identically} to the known $B_n$ theory: the two constructions agree
whenever the classical ideal already happens to be tight against its
own variety, but $I_{(\mu,z)}$ is, in the cases checked, never smaller
and sometimes strictly sharper than $I^B$ on strata with $z\ge1$.
Determining exactly which $(\mu,z)$ targets exhibit equality, and
whether $I_{(\mu,z)}\subset I^B_{(\lambda,\theta)}$ holds in general
once bipartitions are correctly matched to orbit types, is left open.

\section{Some Examples}

\label{sec:verification}

Although Theorem~\ref{thm:main} is now fully proved, we record here the
four computational cases that guided its discovery, since they illustrate
the construction concretely and independently confirm it on cases spanning
different values of $n$, $r$, and $\mu$, including the stabilizer-size
false start corrected by Remark~\ref{rem:stabilizer-mistake} above. In
every case, invariance was checked \emph{exhaustively} over the full
group $G$
(not by sampling), by reducing the image of every generator under every
group element modulo the constructed ideal via a Gr\"obner basis, and the
vanishing locus was checked against representative points of every
relevant stratum. Table~\ref{tab:coverage} summarizes which dimension each
case tests; no two cases are redundant.

\begin{table}[h]
\centering
\begin{tabular}{clc}
\toprule
Example & Dimension tested & $|G|$ checked \\
\midrule
\ref{ex:n3} & complete coverage of a base case ($n=3$, $r=2$) & $48$ \\
\ref{ex:n4} & non-extremal classical partition $\mu$ & $384$ \\
\ref{ex:r3} & nontrivial roots of unity ($r>2$) & $162$ \\
\ref{ex:z2} & $z\ge2$ combined with nontrivial $\mu$ & $384$ \\
\bottomrule
\end{tabular}
\caption{The four independent verification cases and what each isolates.}
\label{tab:coverage}
\end{table}

\begin{example}[$n=3$, $r=2$: all seven strata]
\label{ex:n3}
$G(2,1,3)$ has order $48$ and $K^3$ decomposes into seven orbit-type strata:
three with $z=0$ (partitions of $3$), one for each of $z=1$ (partitions of
$2$: generic and coincide), $z=2$, and $z=3$. Every stratum's ideal was
computed via Definition~\ref{def:main} and verified: the three $z=0$
ideals are $\phi(I_\mu)\cap(x_1x_2x_3)$ for $\mu\vdash3$ (the fully generic
one being exactly the polynomial discriminant of $G(2,1,3)$ computed
independently in~\cite{KN}); the generic-$z{=}1$ ideal is
$\big(x_1x_2(x_1^2-x_2^2),\,x_1x_3(x_1^2-x_3^2),\,x_2x_3(x_2^2-x_3^2),\,
x_1x_2x_3\big)$; the coincide-$z{=}1$ ideal is $(x_1x_2,x_1x_3,x_2x_3)$;
the $z=2$ ideal is the maximal ideal $(x_1,x_2,x_3)$.
\end{example}

\begin{example}[$n=4$, $r=2$: a non-extremal partition]
\label{ex:n4}
Targeting $(\mu,z)=((2,1),1)$, the first classical partition tested that
is neither the fully generic nor the fully coincident type, yields a
$9$-generator ideal, verified invariant under all $384$ elements of
$G(2,1,4)$, and confirmed on ten representative points (including a check
that moving the zero coordinate to a different position gives the same
answer by symmetry).
\end{example}

\begin{example}[$n=3$, $r=3$: nontrivial roots of unity]
\label{ex:r3}
Targeting the generic-$z{=}1$ stratum for $G(3,1,3)$ (order $162$) gives
$$I = \big(x_1x_2(x_1^3-x_2^3),\, x_1x_3(x_1^3-x_3^3),\,
x_2x_3(x_2^3-x_3^3),\, x_1x_2x_3\big),$$
the direct cubic analogue of Example~\ref{ex:n3}'s construction. Beyond the
usual invariance and vanishing checks, this example exhibits a phenomenon
with no $r=2$ analogue: the point $(1,\xi,0)$ with $\xi=e^{2\pi i/3}$ a
primitive cube root of unity (so $\xi\ne1$ but $\xi^3=1$) is correctly
detected as \emph{more} symmetric than the generic type, exactly as the
literal-coincidence point $(1,1,0)$ is.
\end{example}

\begin{example}[$n=4$, $r=2$: $z\ge2$ combined with nontrivial $\mu$]
\label{ex:z2}
Targeting $(\mu,z)=((1,1),2)$ closes the last independent combination not
covered by the previous three examples. The resulting $10$-generator ideal
is verified invariant under all $384$ elements of $G(2,1,4)$, and correctly
distinguishes its own type from the more symmetric coincide-type at the
same $z=2$, while correctly excluding every stratum with $z<2$.
\end{example}

\section{A monomial criterion in three regimes}
\label{sec:monomial}

We have proved that $I_{(\mu,z)}$ is always $G(r,1,n)$-invariant
(Theorem~\ref{thm:invariance}), and that the dominance order on pairs
$(\mu,z)$ exactly characterizes both the inclusion of these ideals and the
reverse inclusion of their varieties (Theorem~\ref{thm:main}), a complete
analogue of the main theorem of~\cite{MRV} in this setting. We now turn to
a monomial criterion analogous to~\cite[Prop.~2]{MRV}: given a monomial
$m$ appearing in an element $P$ of a $G(r,1,n)$-invariant ideal $I$, when
can we conclude $I_{(\mu,z)}\subset I$ for some $(\mu,z)$ determined by
$m$? We settle this completely in three regimes; only their combination
remains open.

\subsection{Squarefree monomials}

The simplest case is also the cleanest: when $m$ has no repeated
variable at all.

\begin{proposition}[Squarefree monomials]
\label{prop:squarefree}
If $m$ is squarefree with support of size $k$, then
$I_{((k),\,n-k)}\subset I$ for every $G(r,1,n)$-invariant ideal $I\ni m$.
\end{proposition}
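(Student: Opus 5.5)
The plan is to compute $I_{((k),n-k)}$ explicitly and then use only the permutation part of $G(r,1,n)$. Take $z=n-k$ and $\mu=(k)\vdash k$ in Definition~\ref{def:main}. The one-row partition $(k)$ is the top of the dominance order on partitions of $k$. Its classical Specht polynomial (one row, so no column Vandermonde factors) is the constant $1$, so $I_{(k)}=(1)$ in $K[y_1,\dots,y_k]$. This is exactly the collapse already used in Remark~\ref{rem:r-geq-2}. Hence $\phi_S(I_{(k)})=(1)$ for every $S$ with $|S|=k$, every piece $X_S+\phi_S(I_{(k)})$ is the whole ring, and the intersection over $S$ is the whole ring. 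Therefore
$$I_{((k),n-k)} = I_{\ge n-k+1} = \big(\text{all squarefree monomials of degree } k\big).$$
Geometrically this matches Corollary~\ref{cor:variety}: no partition of $k$ fails to be dominated by $(k)$, so the variety is just $\{z(x)\ge n-k+1\}$.

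It then suffices to show that every squarefree monomial of degree $k$ lies in $I$. Write $m=c\,x_{i_1}\cdots x_{i_k}$ with distinct indices and $c\in K^\times$; dividing by $c$ we may take $c=1$. Given any other $k$-subset $\{j_1,\dots,j_k\}$, choose $\sigma\in S_n$ carrying $\{i_1,\dots,i_k\}$ to it. The element $(1,\dots,1;\sigma)\in G(r,1,n)$ sends $m$ to $x_{j_1}\cdots x_{j_k}$. Since $I$ is $G$-invariant and contains $m$, it contains this monomial. Thus all generators of $I_{\ge n-k+1}$ lie in $I$, which gives $I_{((k),n-k)}\subset I$. The scalar part of the group is not even needed.

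The degenerate case $k=0$ should be checked separately: then $m$ is a nonzero constant, $I$ is the unit ideal, and the inclusion is trivial (consistently, $I_{\ge n+1}$ is generated by the empty monomial $1$).

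The only step needing care is the identification $I_{(k)}=(1)$. I would justify it directly from the definition of Specht polynomials in~\cite{MRV}: a one-row tableau has only singleton columns, so its Specht polynomial is the empty product $1$. Alternatively, it follows from the variety formula, since $V(I_{(k)})$ must be empty.

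If one instead wants the weaker hypothesis "$m$ appears as a monomial of some $P\in I$", one would first isolate $m$ from $P$ inside $I$. The first tool to try is averaging over the diagonal torus $\mu_r^n$, which projects $P$ onto its component with fixed exponent residues mod $r$. Isolating $m$ completely would then require a further step, and I expect that extraction to be the real obstacle. As stated, however, $m\in I$, and the argument above suffices.
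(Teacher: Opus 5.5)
Your proof is correct and matches the paper's argument: you use $I_{(k)}=(1)$ to collapse Definition~\ref{def:main} to $I_{((k),n-k)}=I_{\ge n-k+1}$, the ideal of all squarefree degree-$k$ monomials, and then apply permutations in $G(r,1,n)$ to carry $m$ onto each of these generators. Your separate treatment of $k=0$ and your remark that the scalar part of the group is not needed are harmless additions that the paper leaves implicit.
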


\begin{proof}
By Definition~\ref{def:main}, $I_{((k),n-k)}$ is generated by all
squarefree degree-$k$ monomials (since $(k)$ is the most concentrated
partition of $k$, $I_{(k)}=(1)$ classically, collapsing
Definition~\ref{def:main} to $I_{\ge n-k+1}$). Every $G(r,1,n)$-translate
of $m$ is, up to a nonzero scalar, a squarefree degree-$k$ monomial (apply
a permutation to $m$'s support); conversely every squarefree degree-$k$
monomial arises this way. Hence the ideal generated by the orbit of $m$,
and so any invariant ideal containing $m$, already contains every
generator of $I_{((k),n-k)}$.
\end{proof}

\subsection{Pure transport}

At the other extreme, when every exponent of $m$ is a multiple of $r$,
the classical theory of~\cite{MRV} applies almost verbatim.

\begin{proposition}[Pure transport]
\label{prop:transport}
Suppose $m=\phi_S(m')$ for some $S$ with $|S|=n$ (i.e.\ $S=\{1,\dots,n\}$)
and some classical monomial $m'\in K[y_1,\dots,y_n]$ satisfying the room
condition of~\cite[Prop.~2]{MRV} for a target classical partition $\mu'$.
Then $I_{(\mu',0)}\subset I$ for every $G(r,1,n)$-invariant ideal $I\ni m$.
\end{proposition}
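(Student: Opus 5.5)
The plan is to pull the problem back along the transport map and invoke the classical criterion~\cite[Prop.~2]{MRV} directly. With $S=\{1,\dots,n\}$ we have $X_S=0$, so Definition~\ref{def:main} collapses to
$$I_{(\mu',0)}=\phi(I_{\mu'})K[x_1,\dots,x_n]\;\cap\;(x_1\cdots x_n),$$
where $\phi=\phi_{\{1,\dots,n\}}$ sends $y_j\mapsto x_j^r$. It therefore suffices to show that the extended ideal $\phi(I_{\mu'})K[x]$ lies in $I$. For that, it is enough that $\phi(f)\in I$ for every generator $f$ of $I_{\mu'}$.

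First, I set $J:=\phi^{-1}(I)\subset K[y_1,\dots,y_n]$. This is an ideal, being the preimage of an ideal under a ring homomorphism. Next, I check that $J$ is $S_n$-invariant. For $\sigma\in S_n$, viewed as the element $(1;\sigma)\in G(r,1,n)$, the map $\phi$ is equivariant: $\sigma(\phi(g))=\phi(\sigma\cdot g)$, since $\phi$ merely raises each variable to the $r$-th power. This is the same computation as in the proof of Theorem~\ref{thm:invariance}, now with trivial scalars. So if $g\in J$, then $\phi(\sigma\cdot g)=\sigma(\phi(g))\in I$ by $G$-invariance of $I$, and hence $\sigma\cdot g\in J$. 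Finally, $m=\phi(m')\in I$ gives $m'\in J$.

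Next, I apply~\cite[Prop.~2]{MRV} to the symmetric ideal $J$, taking $P=m'$ itself, whose unique monomial $m'$ satisfies the room condition for $\mu'$ by hypothesis. This yields $I_{\mu'}\subset J$, i.e.\ $\phi(f)\in I$ for every $f\in I_{\mu'}$. Hence $\phi(I_{\mu'})K[x]\subset I$, and intersecting with $(x_1\cdots x_n)$ gives $I_{(\mu',0)}\subset I$.

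The only step needing care is the application of~\cite[Prop.~2]{MRV}. I must confirm that its hypotheses are exactly "a symmetric ideal containing a polynomial in which a monomial satisfying the room condition appears". I must also confirm that nothing in it depends on the field beyond what we assume, since $J$ need not be radical or have any special form. If instead the proposition is phrased for the monomial appearing in some $P\in J$, the choice $P=m'$ covers it trivially. Everything else is formal, as the scalar part of $G$ never enters.
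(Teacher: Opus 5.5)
Your proof is correct and follows essentially the paper's route: both apply \cite[Prop.~2]{MRV} in the $y$-variables to a symmetric ideal containing $m'$, transport the resulting inclusion along $\phi$, and discard the factor $(x_1\cdots x_n)$. The only difference is that the paper applies \cite[Prop.~2]{MRV} to the ideal generated by the $S_n$-orbit of $m'$ and pushes forward (identifying the $G$-orbit of $m$ with $\phi$ of that orbit), whereas you pull $I$ back to $J=\phi^{-1}(I)$; your version uses only the permutation part of $G$ and needs neither that orbit identification nor injectivity of $\phi$.
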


\begin{proof}
Since every exponent of $m$ is a multiple of $r$, $g(m)=m$ up to a
nonzero scalar for every scaling $g\in(\mathbb{Z}/r\mathbb{Z})^n$ (as in the
proof of Theorem~\ref{thm:invariance}, $r$-th powers absorb the scalar
action), so the orbit of $m$ under $G(r,1,n)$ coincides with
$\phi(\text{orbit of }m'\text{ under }S_n)$. By~\cite[Prop.~2]{MRV},
$I_{\mu'}\subset(\text{orbit of }m')$ in $K[y_1,\dots,y_n]$; applying the
(injective) ring homomorphism $\phi=\phi_{\{1,\dots,n\}}$ and intersecting
with the automatically-satisfied $I_{\ge1}$ gives
$I_{(\mu',0)}=\phi(I_{\mu'})\cap I_{\ge1}\subset\phi(\text{orbit of
}m')\subset I$.
\end{proof}

\subsection{Several simultaneous excess variables}

For monomials mixing a squarefree part with one or several simultaneous
excess exponents, we obtain a sharp symbolic-power bound.

\begin{theorem}[Several simultaneous excess variables]
\label{thm:monomial-excess}
Let $m=x_1^{1+e_1}\cdots x_j^{1+e_j}x_{j+1}\cdots x_k$ with
$e_1\ge e_2\ge\cdots\ge e_j\ge1$ and $1\le j\le k\le n$, and let $I$ be
any $G(r,1,n)$-invariant ideal containing $m$. Set
$$p:=1+\max_{1\le i\le j}\left\lfloor\frac{(n-i+1)e_i}{k-i+1}\right\rfloor.$$
Then $I_{((k),\,n-k)}^{\,p}\subset I$, and $p$ is minimal: there exists a
$G(r,1,n)$-invariant ideal $I\ni m$ with $I_{((k),\,n-k)}^{\,p-1}\not\subset I$.
\end{theorem}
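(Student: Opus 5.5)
The plan is to reduce everything to a divisibility statement about monomials. Write $J:=I_{((k),n-k)}$. As in the proof of Proposition~\ref{prop:squarefree}, $J$ is the ideal generated by all squarefree monomials of degree $k$. So $J^{p}$ is generated by products $x^{a}=u_1\cdots u_p$ of $p$ squarefree degree-$k$ monomials. For any $g\in G(r,1,n)$, $g(m)$ is a nonzero scalar multiple of a monomial, obtained by permuting the variables of $m$. Hence every $G$-invariant $I\ni m$ contains the monomial ideal $M$ generated by the $S_n$-orbit of $m$, and $M$ is itself $G$-invariant. A monomial $x^{a}$ lies in $M$ exactly when it is divisible by some permuted copy of $m$. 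Writing $a_{(1)}\ge a_{(2)}\ge\cdots\ge a_{(n)}$ for the sorted exponents, this holds if and only if
$$a_{(i)}\ge 1+e_i \quad(1\le i\le j)\qquad\text{and}\qquad a_{(k)}\ge1 .$$
Both claims of the theorem then become statements about exponent vectors of products of $p$ (respectively $p-1$) squarefree degree-$k$ monomials.

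\emph{Inclusion $J^p\subset I$.} Let $x^a=u_1\cdots u_p$ as above. The condition $a_{(k)}\ge1$ holds already for $u_1$. Suppose for contradiction that $a_{(i)}\le e_i$ for some $i\le j$. Let $T$ be the set of indices carrying the $i-1$ largest exponents. Every variable outside $T$ then has exponent at most $e_i$. Each factor $u_l$ has $k$ distinct variables, at most $i-1$ of them in $T$, so it contributes at least $k-i+1$ to the total degree outside $T$. Comparing the two bounds on that degree gives
$$p(k-i+1)\le (n-i+1)e_i, \qquad\text{so}\qquad p\le\Big\lfloor\tfrac{(n-i+1)e_i}{k-i+1}\Big\rfloor\le p-1,$$
a contradiction. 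Hence $x^a\in M\subset I$.

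\emph{Minimality.} Take $I=M$ and $q:=p-1$. Choose $i$ attaining the maximum, so $q=\lfloor (n-i+1)e_i/(k-i+1)\rfloor$. If $q=0$, then $J^0=(1)\not\subset M$, since $m$ is not a unit. Otherwise, build $q$ squarefree degree-$k$ monomials as follows. Each contains $x_1,\dots,x_{i-1}$. Each also contains $k-i+1$ further variables taken from $x_i,\dots,x_n$ in consecutive cyclic blocks. This is possible because $k-i+1\le n-i+1$, so each block consists of distinct variables. The round-robin distribution gives each of $x_i,\dots,x_n$ exponent at most
$$\Big\lceil \tfrac{q(k-i+1)}{n-i+1}\Big\rceil\le e_i,$$
using $q(k-i+1)\le (n-i+1)e_i$. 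The product therefore has $a_{(i)}\le e_i$, so it lies outside $M$ by the criterion above. This shows $J^{p-1}\not\subset M$.

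The main obstacle is the counting in the inclusion step. One has to see that a single failing index $i$ forces the whole degree bound, by splitting the variables into the top $i-1$ and the rest. Once that is in place, the extremal construction is just the equality case of the same count.
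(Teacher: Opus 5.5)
Your proposal is correct and follows essentially the paper's argument. The inclusion uses the same degree count outside the top $i-1$ positions (the paper bounds the inside by $(i-1)q$, you bound the outside below by $p(k-i+1)$, which is the same inequality). Minimality uses the same witness: $x_1,\dots,x_{i-1}$ in every factor and the remaining variables distributed in cyclic blocks. Stating membership in the orbit ideal as the sorted-exponent criterion up front is a tidy way to absorb the paper's explicit matching step and its final pigeonhole step.
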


\begin{proof}
\emph{Upper bound.} As in Proposition~\ref{prop:squarefree}'s proof,
$I_{((k),n-k)}$ is generated by all squarefree degree-$k$ monomials, so a
generator of its $q$-th power is $\prod_{l=1}^q\prod_{i\in T_l}x_i$ for
$k$-subsets $T_1,\dots,T_q$ (repetition allowed), giving each variable
$i$ multiplicity $c_i=|\{l:i\in T_l\}|$. Every $G(r,1,n)$-orbit generator
of $m$ has the form $x_{\pi(1)}^{1+e_1}\cdots x_{\pi(j)}^{1+e_j}
\prod_{i\in S'\setminus\pi(\{1,\dots,j\})}x_i$ for a $k$-subset $S'$ and
an injection $\pi:\{1,\dots,j\}\to S'$ (scaling only rescales $m$;
permutations freely relabel $S'$ and, within it, which $j$ positions
carry which excess).

Let $c_{(1)}\ge c_{(2)}\ge\cdots\ge c_{(n)}$ be the multiplicities sorted
decreasingly. We claim $q=p$ forces $c_{(i)}\ge e_i+1$ for every
$i=1,\dots,j$ simultaneously. If not, $c_{(i)}\le e_i$ for some $i\le j$;
since the sequence is sorted, $c_{(i)},\dots,c_{(n)}$ are all $\le e_i$,
and trivially $c_{(1)},\dots,c_{(i-1)}\le q$ (a position appears in at
most $q$ of the $q$ subsets). Hence
$qk=\sum_{l}c_{(l)}\le(i-1)q+(n-i+1)e_i$, i.e.\
$q\le\lfloor(n-i+1)e_i/(k-i+1)\rfloor<p$, contradicting $q=p$.

So at $q=p$, the top $j$ sorted positions satisfy $c_{(i)}\ge e_i+1$ for
$i=1,\dots,j$; matching the $i$-th largest such position to the
requirement $e_i$ (largest to largest) gives an injection $\pi$ into $j$
positions each meeting its own threshold. Since $\bigcup_lT_l$ has size
$\ge k$ (already true of any single $T_l$), removing these $j$ positions
leaves at least $k-j$ further positions with multiplicity $\ge1$,
completing a valid $S'\supset\pi(\{1,\dots,j\})$ of size $k$. Hence the
product is divisible by the corresponding orbit generator, proving
$I_{((k),n-k)}^{p}\subset(\text{orbit of }m)\subset I$.

\emph{Minimality.} Let $i^*$ achieve the maximum defining $p$, and set
$q=p-1=\lfloor(n-i^*+1)e_{i^*}/(k-i^*+1)\rfloor$. Take
$I=(\text{orbit of }m)$. Fix positions $1,\dots,i^*-1$ to belong to
\emph{every} $T_l$. On the remaining $n-i^*+1$ positions
$\{i^*,\dots,n\}$, apply the single-variable cyclic construction
(specialized to $j=1$) with parameters
$n'=n-i^*+1$, $k'=k-i^*+1$, $e'=e_{i^*}$, $q=\lfloor n'e'/k'\rfloor$:
list these positions repeated $e_{i^*}$ times, truncate to the first
$qk'$ entries, and cut cyclically into $q$ blocks $R_1,\dots,R_q$ of size
$k'$; set $T_l:=\{1,\dots,i^*-1\}\cup R_l$, giving $q$ genuine $k$-subsets.
Positions $1,\dots,i^*-1$ have multiplicity exactly $q$, and positions
$i^*,\dots,n$ have multiplicity $\le e_{i^*}$.

No orbit generator can divide $\prod_{l=1}^q\prod_{i\in T_l}x_i$: such a
generator needs $j$ distinct positions meeting thresholds
$e_1\ge\cdots\ge e_j$, all of which are $\ge e_{i^*}$ for the first $i^*$
of them (as the $e_i$ are sorted); a position among $\{i^*,\dots,n\}$
has multiplicity $\le e_{i^*}<e_{i^*}+1\le e_i+1$ for $i\le i^*$, so
cannot meet \emph{any} of the top $i^*$ thresholds, forcing all $i^*$ of
them onto the $i^*-1$ positions $1,\dots,i^*-1$, which is impossible by
pigeonhole. Hence $I_{((k),n-k)}^{q}\not\subset I$, proving minimality of
$p=q+1$.
\end{proof}

\begin{remark}
\label{rem:radicality}
Theorem~\ref{thm:monomial-excess} specializes cleanly: at $j=1$ (a single
excess variable $e_1=e$), $p=1+\lfloor ne/k\rfloor$, recovering exactly
the single-excess bound found first. At $k=n$ (full support),
$(n-i+1)/(k-i+1)=1$ for every $i$, so the maximum in the formula
collapses to $p=1+e_1$ \emph{regardless of $j$}: only the single
largest excess matters, however many smaller excesses accompany it. This
is exactly why an initial, simpler guess, that $p$ depends solely on
$e_{\max}=e_1$ in general and ignores the other excesses, matched every
example first tried: those examples happened to have $k=n$, or a single
excess clearly dominating the rest. The guess was refuted only once
tested at $k<n$ with two \emph{comparably large} excesses: at $n=4$,
$k=3$, $e_1=e_2=2$, this guess predicts $p=3$, but the true minimal
exponent is $4$, exactly
as the theorem above correctly computes (the maximum in the formula is
achieved at $i=2$, not $i=1$, precisely because two thresholds this
close together are harder to satisfy simultaneously than the naive
one-threshold count suggests). Interaction with the pure-transport
regime of Proposition~\ref{prop:transport} (some exponents multiples of
$r$, others in excess of a squarefree baseline) remains open. This
connects to the radicality question in the closing remarks below: the
orbit ideal of $x_1^2x_2\cdots x_k$ is a monomial ideal whose radical is
exactly $I_{((k),n-k)}$, explaining why the ideal itself is too small to
contain $I_{((k),n-k)}$ but a suitable power is not (see
Proposition~\ref{prop:radical-extremes} below for a positive radicality
result at the two extremes of the $z=0$ level).
\end{remark}

\begin{proposition}[Radicality at the $z=0$ extremes]
\label{prop:radical-extremes}
For every $n\ge1$ and $r\ge2$, both $I_{((1,\dots,1),0)}$ and
$I_{((n),0)}$ are radical.
\end{proposition}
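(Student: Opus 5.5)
The plan is to make both ideals completely explicit as principal ideals and then check radicality through unique factorization in $K[x_1,\dots,x_n]$: a principal ideal $(f)$ is radical exactly when $f$ is squarefree, i.e.\ a product of pairwise non-associate irreducibles. At $z=0$ the only subset in Definition~\ref{def:main} is $S=\{1,\dots,n\}$, where $X_S=0$. So
$$I_{(\mu,0)}=\phi(I_\mu)\cap I_{\ge1}, \qquad I_{\ge1}=(x_1\cdots x_n),$$
with $\phi=\phi_{\{1,\dots,n\}}$.

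For $I_{((n),0)}$ the argument is short. As in Proposition~\ref{prop:squarefree}, the classical $I_{(n)}$ is the unit ideal, so $I_{((n),0)}=(x_1\cdots x_n)$. The generator is a product of the distinct primes $x_1,\dots,x_n$, so the ideal is radical. Equivalently, it is the intersection of the primes $(x_i)$.

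For $I_{((1,\dots,1),0)}$, I will use the classical fact underlying~\cite{MRV} that $I_{(1,\dots,1)}$ is principal, generated by the Vandermonde $\Delta(y)=\prod_{i<j}(y_i-y_j)$. Hence $\phi(I_{(1,\dots,1)})=(D)$ with $D=\prod_{i<j}(x_i^r-x_j^r)$. The intersection of two principal ideals in a UFD is generated by their lcm. No $x_l$ divides $D$: setting $x_l=0$ leaves a nonzero polynomial whenever $n\ge2$, and for $n=1$ we have $D=1$. So $x_1\cdots x_n$ and $D$ are coprime, and
$$I_{((1,\dots,1),0)}=(f),\qquad f=x_1\cdots x_n\prod_{i<j}(x_i^r-x_j^r).$$
Next, factor $x_i^r-x_j^r=\prod_{\zeta\in\mu_r}(x_i-\zeta x_j)$. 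This writes $f$ as the product of the linear forms defining the reflecting hyperplanes of $G(r,1,n)$, namely $x_l=0$ and $x_i=\zeta x_j$, each taken once. This is the point where Steinberg's factorization theorem enters: $f$ is the product of the reduced hyperplane equations, i.e.\ the Jacobian-type discriminant with multiplicities removed.

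The remaining step, and the only real obstacle, is to check that these linear forms are pairwise non-associate, so that $f$ is squarefree. Forms $x_i-\zeta x_j$ with different index pairs $\{i,j\}$ have different supports. For the same pair, $x_i-\zeta x_j$ and $x_i-\zeta' x_j$ are associate only if $\zeta=\zeta'$. For the pair read in the other order, $x_j-\zeta x_i$ is associate to $x_i-\zeta^{-1}x_j$, but each unordered pair occurs only once in $\prod_{i<j}$. The coordinate forms $x_l$ are clearly distinct from all of these.

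The delicate point is that the factorization needs $|\mu_r|=r$, i.e.\ $r$ distinct roots of unity in $K$, which forces $\operatorname{char}K\nmid r$. I will read the standing hypothesis that $K$ "contains all $r$-th roots of unity" in this sense. Without it, in characteristic $p\mid r$, $x_i^r-x_j^r$ acquires repeated factors and radicality genuinely fails. Under this hypothesis $f$ is a product of distinct primes, so $(f)=\bigcap(\ell)$ over its linear factors $\ell$, which is radical, completing both cases.
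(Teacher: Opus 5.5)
Your proposal is correct and matches the paper's proof step for step: you reduce to $\phi(I_\mu)\cap(x_1\cdots x_n)$, use $I_{(n)}=(1)$, write the $(1,\dots,1)$ case as the principal ideal generated by $x_1\cdots x_n\prod_{i<j}\prod_{\zeta\in\mu_r}(x_i-\zeta x_j)$, and conclude from pairwise non-associateness of the linear factors, with Steinberg's theorem decorative in both versions. Your remark that the identity $x^r-y^r=\prod_{\zeta\in\mu_r}(x-\zeta y)$ needs $|\mu_r|=r$, i.e.\ $\operatorname{char}K\nmid r$, is a genuine sharpening, since the paper uses this identity tacitly and for $r=p=\operatorname{char}K$, $n=2$ the ideal $(x_1x_2(x_1-x_2)^p)$ is not radical.
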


\begin{proof}
For $z=0$ there is a single subset $S=\{1,\dots,n\}$, so $X_S=(0)$ and
$I_{(\mu,0)}=\phi(I_\mu)\cap I_{\ge1}$, where $I_{\ge1}=(x_1\cdots x_n)$
(the unique squarefree monomial of degree $n$) and
$\phi=\phi_{\{1,\dots,n\}}$.

For $\mu=(n)$: the classical Specht ideal $I_{(n)}$ of the single-row
shape is generated by the constant Specht polynomial $1$ (a one-row
tableau has $n$ columns of length $1$, each contributing the empty
Vandermonde product), so $I_{(n)}=(1)$ and $\phi(I_{(n)})$ is the whole
ring. Hence $I_{((n),0)}=I_{\ge1}=(x_1\cdots x_n)$, a squarefree
monomial ideal, which is radical.

For $\mu=(1,\dots,1)$: the classical Specht ideal of the single-column
shape is principal, generated by the full Vandermonde
$\prod_{i<j}(y_i-y_j)$, so
$$\phi(I_{(1,\dots,1)})=\Big(\prod_{i<j}(x_i^r-x_j^r)\Big)
=\Big(\prod_{i<j}\prod_{\zeta\in\mu_r}(x_i-\zeta x_j)\Big),$$
using $x^r-y^r=\prod_{\zeta\in\mu_r}(x-\zeta y)$. Since $K$ contains
all $r$-th roots of unity, this exhibits $\phi(I_{(1,\dots,1)})$ as
generated by a product of $r\binom{n}{2}$ linear forms, one for each
pair $i<j$ and each $\zeta\in\mu_r$: these are exactly the
non-coordinate reflection hyperplanes of $G(r,1,n)$. No two of these
forms are proportional (distinct pairs $(i,j)$ involve different
variables, and for fixed $(i,j)$ distinct $\zeta,\zeta'\in\mu_r$ give
non-proportional forms since $\zeta\ne\zeta'$), and none is
proportional to a coordinate function $x_i$ alone (a form $x_i-\zeta
x_j$ always involves $x_j$ with nonzero coefficient $\zeta$). Since
$I_{\ge1}=(x_1\cdots x_n)$ shares no common linear factor with
$\phi(I_{(1,\dots,1)})$, intersecting the two principal ideals (in the
UFD $K[x_1,\dots,x_n]$) multiplies their generators:
$$I_{((1,\dots,1),0)}=\Big(x_1\cdots x_n\cdot
\prod_{i<j}\prod_{\zeta\in\mu_r}(x_i-\zeta x_j)\Big),$$
generated by a product of $n+r\binom{n}{2}$ pairwise non-proportional
linear forms: precisely the full reflection arrangement of
$G(r,1,n)$, up to scalar the discriminant of the group (matching the
$n=3$, $r=2$ instance of Example~\ref{ex:n3}, computed independently
in~\cite{KN}). By Steinberg's factorization theorem~\cite{Ste60}, the
discriminant of a complex reflection group factors into pairwise
non-proportional linear forms, one per reflecting hyperplane; a product
of pairwise non-proportional linear forms is squarefree, hence
$I_{((1,\dots,1),0)}$ is radical.
\end{proof}

\begin{remark}
Proposition~\ref{prop:radical-extremes} covers only the two $z=0$
extremes; Remark~\ref{rem:radicality} already shows radicality can fail
strictly inside the poset. Whether radicality holds throughout the rest
of the $z=0$ level, or degrades as soon as $z\ge1$, is not addressed
here.
\end{remark}

\section{Conclusion and Perspectives}

Together, Sections~\ref{sec:construction}-\ref{sec:monomial} give a
complete dominance theory for $G(r,1,n)$ Specht ideals, invariance and
the three-way dominance equivalence (Theorems~\ref{thm:invariance}
and~\ref{thm:main}), and a monomial criterion settled completely in
three regimes: squarefree monomials, monomials with exponents all
multiples of $r$, and squarefree monomials with several simultaneous
excess exponents, with only their combination left open.

This construction is independent of the representation-theoretic
higher-Specht-polynomial machinery of Ariki-Terasoma-Yamada,
Morita-Yamada, and Can discussed in the Introduction: although that
machinery indexes the coinvariant algebra of $G(r,1,n)$ by $r$-tuples of
partitions, it does not directly yield the geometric, orbit-type-indexed
invariant ideals studied here, as the failed attempt recorded there
shows concretely. The geometric stratification of $K^n$ by orbit type
under $G(r,1,n)$ and the representation-theoretic indexing of its
coinvariant algebra are consequently two related but genuinely different
combinatorial structures, and the present paper treats only the former.

\paragraph{Further directions.} Several natural questions remain, each
pointing to a real mathematical difficulty rather than a routine
extension.

\begin{itemize}
\item \emph{Completing the monomial criterion.} Combining the
pure-transport regime of Proposition~\ref{prop:transport} with the
excess regime of Theorem~\ref{thm:monomial-excess} in a single monomial
poses a genuine difficulty beyond simply running both arguments in
sequence: the pure-transport regime identifies its target partition
$\mu'$ via the classical room condition of~\cite[Prop.~2]{MRV} on the
transported variables, while the excess regime identifies its target via
the monomial's support size directly, and it is not clear how a monomial
mixing both mechanisms (some exponents multiples of $r$, others in
excess of a squarefree baseline) should combine these two, generally
different, ways of identifying a target $(\mu,z)$.

\item \emph{Extending to $G(r,p,n)$ with $p>1$.} This is not a routine
generalization: $G(r,p,n)$, an index-$p$ subgroup of $G(r,1,n)$,
introduces the shift-equivalence structure between coinvariant-algebra
components found by Morita-Yamada~\cite{MY} and used by
Kabor\'e-Nonkan\'e~\cite{KN}, entirely absent here since $p=1$
throughout. Whether the geometric orbit-type stratification of
Section~\ref{sec:orbit-types} extends unchanged, or needs to be refined
to account for this shift-equivalence, is not yet clear.

\item \emph{Relaxing the standing hypothesis on $K$.} The hypothesis
that $K$ be infinite and contain all $r$-th roots of unity is used only
for the existence of points of every exact type $(\mu,z)$ in
Theorem~\ref{thm:main}'s proof, where infiniteness guarantees enough
distinct nonzero $r$-th power values for a partition $\mu$ with up to
$n$ distinct parts; a finite field would need an explicit lower bound on
$|K|$ in terms of $n$ instead, replacing infiniteness with a concrete
counting argument.

\item \emph{Radicality of $I_{(\mu,z)}$.} Determining in general when
$I_{(\mu,z)}$ is radical already surfaces concretely in the search for a
monomial criterion: Remark~\ref{rem:radicality} notes that the
orbit ideal of $x_1^2x_2\cdots x_k$ is a monomial ideal whose radical is
exactly $I_{((k),n-k)}$, so the gap between an ideal and its radical is
already visible in the simplest mixed examples. We settle the two
extremes of the $z=0$ level in Proposition~\ref{prop:radical-extremes},
via Steinberg's factorization of the reflection arrangement; whether
radicality persists across the rest of $z=0$, and what happens once
$z\ge1$, is left open, well before any attempt at a general radicality
criterion.

\item \emph{Isotypic component containment.} For $S_n$ and $B_n$ (and,
for shapes $\{\lambda,\mu\}$ with $\lambda\ne\mu$, for $D_n$), the
Specht ideal of a given representation shape is known to contain the
full isotypic component of that representation in
$K[X]$~\cite{MRV,DMRV23}; for $D_n$ this remains open precisely for the
shapes $\{\lambda,\pm\}$ split by Clifford theory, Debus and Gottwald's
Question~6.2 in~\cite{DG25}. The analogous question for $I_{(\mu,z)}$
is, if anything, less well-posed at the outset: unlike $S_n$, $B_n$,
and $D_n$, there is no known bijection between geometric orbit types
$(\mu,z)$ and the representation labels ($r$-tuples of partitions) of
Ariki-Terasoma-Yamada~\cite{ATY} and Morita-Yamada~\cite{MY};
indeed our own Introduction records that these two indexings genuinely
disagree. Whether some natural map nonetheless sends each $(\mu,z)$ to
an irreducible $G(r,1,n)$-module whose isotypic component $I_{(\mu,z)}$
contains is entirely open. A partial positive result is available at
$r=2$: whenever $I_{(\mu,z)}\supset I^B_{(\lambda,\theta)}$ for the
corresponding bipartition (as verified for every case tested in
Section~\ref{sec:r2-comparison}), containment of the classical
$B_n$-isotypic component in $I^B_{(\lambda,\theta)}$~\cite{DMRV23} is
automatically inherited by the larger ideal $I_{(\mu,z)}$.
\end{itemize}

\paragraph{Computational verification.} Every generator count, invariance
check, and symbolic-power bound reported in this paper (Table~\ref{tab:coverage}
and Theorem~\ref{thm:monomial-excess}) was verified by exhaustive
computation in SymPy: full group invariance was checked against every
element of $G(r,1,n)$, never a sample. Scripts reproducing all of these
computations from the definitions are provided as supplementary material.

\section*{Compliance with Ethical Standards}
\subsection*{Funding}
No funding were received to support this study 
\subsection*{Conflicts of Interest}
The authors declare that there are no conflicts of interest.
\subsection*{Author contribution declaration}  All the authors contribute equally in the conception and writing of this paper
\subsection*{Data Availability Statements}
No data were used to support this study

\subsection*{Acknowledgments.} The authors thank their home institutions,
IMSP (Universit\'e d'Abomey-Calavi) and IUFIC (Universit\'e Thomas
Sankara), for their support.

\section*{Declaration of AI assistance}

During the preparation of this manuscrit, The authors used Claude (Anthropic) as a working aid: to check the consistency of 
definitions, notations, and croos-references across the text; to verify the correctness of computations and proofs, including by 
independently re-deriving several arguments. The authors  reviewed all suggested content, verified all mathematical claims 
independently, and take full responsability for the correctness and final form of the manuscript.

\bibliographystyle{plain}

\begin{thebibliography}{9}

\bibitem{MRV} P. Moustrou, C. Riener, H. Verdure, \emph{Symmetric ideals,
Specht polynomials and solutions to symmetric systems of equations},
J. Symbolic Comput. \textbf{107} (2021), 106-121.

\bibitem{ONT2} S.\,J. Owolabi, I. Nonkan\'e, J. Tossa, \emph{Direct product
of symmetric ideals, Specht polynomials and solutions of an equivariant
polynomial system with respect to direct products of symmetric groups},
submitted to J. Algebra Appl.

\bibitem{ATY} S. Ariki, T. Terasoma, H. Yamada, \emph{Higher Specht
polynomials}, Hiroshima Math. J. \textbf{27} (1997), no.~1, 177-188.

\bibitem{MY} H. Morita, H.-F. Yamada, \emph{Higher Specht polynomials for the
complex reflection group $G(r,p,n)$}, Hokkaido Math. J. \textbf{27} (1998),
no.~3, 505-515.

\bibitem{Can} H. Can, \emph{Representations of the Generalized Symmetric
Groups}, Contributions to Algebra and Geometry \textbf{37} (1996), no.~2,
289-307.

\bibitem{DG25} S. Debus, K.\,K. Gottwald, \emph{Posets for Specht ideals
of essential real reflection groups}, arXiv:2506.15335 (2025).

\bibitem{DMRV23} S. Debus, P. Moustrou, C. Riener, H. Verdure, \emph{The
poset of Specht ideals for hyperoctahedral groups}, Algebraic
Combinatorics \textbf{6} (2023), no.~6, 1593-1619.

\bibitem{Ste60} R. Steinberg, \emph{Invariants of Finite Reflection
Groups}, Canadian Journal of Mathematics \textbf{12} (1960), 616-618.

\bibitem{KN} J. Kabor\'e, I. Nonkan\'e, \emph{A uniform decomposition
theorem for invariant differential operators on imprimitive complex
reflection groups $G(r,p,n)$}, arXiv:2608.01474 (2026).

\end{thebibliography}

\end{document}